\numberwithin{equation}{section}
\def\ca{{\mathcal A}}
\def\ce{{\mathcal E}}
\def\ch{{\mathcal H}}
\def\ci{{\mathcal I}}
\def\cs{{\mathcal S}}
\def\ga{{\mathfrak A}} 
\def\gb{{\mathfrak B}}
\def\gz{{\mathfrak Z}}
\def\bc{{\mathbb C}}
\def\bm{{\mathbb M}}
\def\bn{{\mathbb N}}
\def\bp{{\mathbb P}}
\def\bz{{\mathbb Z}}
\def\a{\alpha}
\def\b{\beta}
\def\g{\gamma}  \def\G{\Gamma}
\def\d{\delta}  
\def\eps{\varepsilon}
\def\io{\iota}
\def\l{\lambda} 
\def\m{\mu}
\def\p{\pi}
\def\n{\nu}
\def\r{\rho}
\def\t{\tau}
\def\f{\varphi}  \def\F{\Phi}
  \def\Th{\Theta}
\def\om{\omega} \def\Om{\Omega}
\newtheorem{thm}{Theorem}[section]
\newtheorem{lem}[thm]{Lemma}
\newtheorem{prop}[thm]{Proposition}
\newtheorem{defin}[thm]{Definition}
\newtheorem{rem}[thm]{Remark}
\def\aut{\mathop{\rm Aut}}
\def\carf{\mathop{\rm CAR}}
\def\spec{\mathop{\rm spec}}
\newcommand{\ty}[1]{\mathop{\rm {#1}}}
\def\di{\mathop{\rm d}\!}
\def\conv{\mathop{\rm conv}}
\def\ad{\mathop{\rm ad}}
\def\slim{\mathop{\rm s-lim}}
\def\idd{{1}\!\!{\rm I}}
\newcommand{\nn}{\nonumber}
\begin{document}

\title[de finetti theorem]
{de finetti theorem on the CAR algebra}
\author{Vitonofrio Crismale}
\address{Vitonofrio Crismale\\
Dipartimento di Matematica\\
Universit\`{a} degli studi di Bari\\
Via E. Orabona, 4, 70125 Bari, Italy}
\email{\texttt{crismalev@dm.uniba.it}}
\author{Francesco Fidaleo}
\address{Francesco Fidaleo\\
Dipartimento di Matematica \\
Universit\`{a} di Roma Tor Vergata\\
Via della Ricerca Scientifica 1, Roma 00133, Italy} \email{{\tt
fidaleo@mat.uniroma2.it}}
\date{\today}

\begin{abstract}
\vskip 0.3cm The symmetric states on a quasi local $C^*$--algebra
on the infinite set of indices $J$ are those invariant under the
action of the group of the permutations moving only a finite, but
arbitrary, number of elements of $J$. The celebrated De Finetti
Theorem describes the structure of the symmetric states (i.e.
exchangeable probability measures) in classical probability. In
the present paper we extend De Finetti Theorem to the case of the
CAR algebra, that is for physical systems describing Fermions.
Namely, after showing that a symmetric state is automatically even
under the natural action of the parity automorphism, we prove that
the compact convex set of such states is a Choquet simplex, whose
extremal (i.e. ergodic w.r.t. the action of the group of
permutations previously described) are precisely the product
states in the sense of Araki--Moriya. In order to do that, we also
prove some ergodic properties naturally enjoyed by the symmetric
states which have a self--containing interest.

\noindent{\bf Mathematics Subject Classification}: 46L53, 46L05, 60G09, 46L30, 46N50.\\
{\bf Key words}: Non commutative probability and statistics;
$C^{*}$--algebras, states; Exchangeability;  Applications to
quantum physics.
\end{abstract}

\maketitle

\section{introduction}
\label{sec1}

Exchangeable or, equivalently, symmetrically dependent sequences
of random variables and symmetric states have been investigated in
a wide way both in Probability Theory and Operator Algebras.

After De Finetti's pioneering work \cite{DeF} for 2--point valued
random variables, it has been shown that more and more general
sequences of exchangeable random variables are mixtures of
independent identical distributed (i.i.d. for short) sequences.
One of its most general version in classical probability was
obtained by Hewitt and Savage in \cite{HS} for exchangeable random
variables distributed on $X=E\times E\times\ldots$, $E$ being a
compact Hausdorff space.

A noncommutative extension of this result for infinite tensor
product $\ga$ of a single $C^{*}$-algebra $\gb$, was given by
St{\o}rmer in \cite{St2}, where it is shown that the symmetric
states on $\ga$ form a simplex whose $*$--weakly closed set of
extremal points is made exactly by the product states. Indeed
St{\o}rmer's theorem, when reduced to abelian $C^{*}$-algebras is
nothing else than Hewitt--Savage result.

Other similar characterizations can be found in \cite{FLV, HM} for
Boson quantum systems, or in \cite{AL, Fr} in the case of
continuous index set. More recently in \cite{K, KS, L}, the
authors got some De Finetti's type theorems in Free Probability,
whereas in \cite{ABCL} some results on the structure of symmetric
(exchangeable) states on general $C^{*}$-algebras have been
obtained.

Besides infinite sequences case, Diaconis and Freedman in
\cite{DF} obtained a De Finetti theorem  for finite sequences of
exchangeable random variables. Namely they showed that the first
$k$ random variables of a permutation invariant distribution of
$n$ random variables can be \emph{approximated} by a convex
combination of $k$ i.i.d. random variables, the error being of
order $O(k^{2}/n)$. This has been the starting point for a recent
intensive investigation of finite De Finetti results in Quantum
Information Theory and the problem of the Entanglement. Unfortunately, when one generalizes to the
tensor product case such a result, it comes out that the
approximating error, contrarily to the classical situation,
depends on the dimension of the state space (see, e.g. \cite{CKMR}
for details). Hence, even if a general extension to infinite
dimensional quantum systems does not exist, some precise estimates,
independent of the dimension, have been evaluated for many
concrete physical classes. In particular, the
dependence on the local dimension is removed, for example,
when there is a bound on the number of the ways in which the system is measured, or
the $n$--particle reduced density matrix is separable \cite{CT},
when one treats an exponential version of the theorem in the case
of coherent states \cite{KM}, or when one takes orthogonal
invariant states \cite{LC}. Moreover, an analogous result can be
found in the so called Quantum Key Distribution (QKD), when one
deals with Gaussian states against general attacks \cite{RC}, or
when one modifies the symmetric hypothesis in a fully compatible way
with continuous--variables QKD protocols \cite{LKGC}.

Although none of the above mentioned results concern Fermions,
nowadays there is a rapidly increasing investigation of properties
and models dealing with physical systems describing such
particles. We mention for example the following issues which is
far to be complete. The investigation of the ground states of
lattice systems based on anticommutation annihilators \cite{M}.
The introduction and the study of the notion of the product state,
and the application to general thermodynamical properties of Fermi
lattice systems (see e.g. \cite{AM1, AM2} and the references cited
therein). The connection with the Markov Property, Quantum
Statistical Systems, Quantum Information Theory and Entanglement,
of chains of Fermi systems \cite{AFM, F1}. Disordered systems
based on Fermions \cite{BF}.

Thus, for the natural applications to Quantum Physics and also the
general implication in Quantum Probability, it is then natural to
address the study of the structure of the symmetric states on the
Fermi algebra, that is the CAR $C^{*}$-algebra. Up to the knowledge
of the authors, no result concerning the systematic study of this
subject is present in literature. This is the object of the
present paper. In more details, our purpose consists in:
\begin{enumerate}
    \item characterizing the extremal points of the convex
    set of symmetric states on the CAR algebra,
    \item showing that every symmetric state is the barycenter of a
    unique maximal Radon measure which is pseudosupported on the extremal states,
    \item proving that the extremal states form a $*$--weakly closed
    subset (thus the symmetric states are isomorphic to the
    regular probability measures on a compact Hausdorff space
    \cite{B}),
     \item determining the type of the von Neumann factors generated by the extremal (i.e. product) states.
\end{enumerate}
One of the main tools in De Finetti theorem for infinite tensor
product of $C^{*}$--algebras \cite{St2} is the asymptotic
Abelianess property with respect to the permutation group. In the
CAR algebra this property is not satisfied, because of the
anticommutation relations between spatially separated operators.
As a consequence, the results relative to the structure of
symmetric states in \cite{St2} can not be directly imported in our
case. Hence, after verifying that the group of permutations which
fix all but a finite number of the points in an arbitrary set $J$
(denoted by $\bp_J$), acts as a group of automorphisms on
$\carf(J)$ (the CAR algebra on $J$), we establish a result which
plays a crucial role in the sequel. Each symmetric (i.e. invariant
under the action of the group of permutations above) state on
$\carf(J)$ is even. This property is exploited throughout the
paper in order to obtain the results listed above. Indeed, it is
used in Theorems 5.3 and 6.1 to prove the equivalence between an
extremal symmetric state on $\carf(J)$ and a product state in the
Araki--Moriya sense (the product being constructed starting by a
single even state on $\bm_{2}(\bc)$), thus reaching point (1).

Further, the even property makes sure that the couple ($\carf(J),
\bp_J$) is $\bp_J$--Abelian (see Theorem \ref{mainer1}). This
allows to prove that the $*$--weakly compact convex set of
symmetric states is a Choquet simplex, and, consequently, to
obtain the ergodic decomposition presented in (2). The unique
decomposing measure is supported on the extremal states when $J$
is countable, and pseudosupported when $J$ is uncountable (see
Theorem \ref{csyx} and the final discussion in Section
\ref{sec6}).

As a consequence of the above discussion, one has that each
symmetric state is (at least in the countable case) a mixture of
product states of Araki--Moriya. This statement, which is our main
result, can be seen as the extension of De Finetti's theorem to
the CAR algebra.

The ending parts of Sections 5 and 6 are aimed to reach points (3)
and (4). In particular, some results due to St{\o}rmer in
\cite{St2} and the identification of every even state on
$\bm_{2}(\bc)$ with a single point of a closed segment (Lemma
2.2), give the extremal (i.e. product) states are a $*$--weakly
closed subset. Furthermore, the very special structure of a
product state, allows to achieve point (4), as Propositions 5.7,
5.8 and the relative following discussions.

As stressed above, the Fermi algebra is not asymptotically Abelian
with respect to $\bp_J$, but the even nature of a symmetric state
yields a "weak" asymptotic Abelianess property, see (ii) of
Theorem \ref{mainer}. This result, coupled with the property that
$\bp_J$ acts as a large group of automorphisms (cf. Definition
\ref{lgau} and Theorem \ref{mainer1}), provides some ergodic
properties (cf. Proposition \ref{avlgu} and Proposition
\ref{scaz}) for averages and orbits of symmetric states, which,
even not used for establishing the main results presented in
paper, may have a self--containing interest.

\section{the CAR algebra}
\label{sec2}

Denote by $[a,b]:=ab-ba$,
$\{a,b\}:=ab+ba$, the commutator and anticommutator between elements
$a$, $b$, respectively.

We start by quickly reviewing the basic properties of the Fermion
$C^{*}$-algebra, which, due to Pauli Exclusion Principle, is
generated by the annihilation and creation operators satisfying
the Canonical Anticommutation Relations. Indeed, let $J$ be an
arbitrary set. The {\it Canonical Anticommutation Relations} (CAR
for short) algebra over  $J$ is the $C^{*}$--algebra $\carf(J)$
with the identity $\idd$ generated by the set $\{a_j,
a^{\dagger}_j\mid j\in J\}$ (i.e. the Fermi annihilators and
creators respectively), and the relations
\begin{equation*}
(a_{j})^{*}=a^{\dagger}_{j}\,,\,\,\{a^{\dagger}_{j},a_{k}\}=\d_{jk}\idd\,,\,\,
\{a_{j},a_{k}\}=\{a^{\dagger}_{j},a^{\dagger}_{k}\}=0\,,\,\,j,k\in J\,.
\end{equation*}
The parity automorphism $\Th$ acts on the generators as
$$
\Th(a_{j})=-a_{j}\,,\,\,\Th(a^{\dagger}_{j})=-a^{\dagger}_{j}\,,\quad
j\in J\,
$$
and induces on $\carf(J)$ a $\bz_{2}$--grading. This grading
yields $\carf(J)=\carf(J)_{+} \oplus\carf(J)_{-}$, where
\begin{align*}
&\carf(J)_{+}:=\{a\in\carf(J) \mid\, \Th(a)=a\}\,,\\
&\carf(J)_{-}:=\{a\in\carf(J) \mid\, \Th(a)=-a\}\,.
\end{align*}
Elements  in $\carf(J)_+$ and in $\carf(J)_-$ are called
{\it even} and {\it odd}, respectively.

Notice that, by definition,
\begin{equation*}
\carf(J)=\overline{\carf{}_0(J)}\,,
\end{equation*}
where
$$
\carf{}_0(J):=\bigcup\{\carf(I)\mid\, I\subset J\,\text{finite}\,\}
$$
is the (dense) subalgebra of the {\it localized elements}.

A map $T:\ca_1\to\ca_2$ between $C^*$--algebras with
$\bz_{2}$--gradings $\Th_1$ and $\Th_2$, is said to be {\it even}
if it is grading--equivariant:
$$
T\circ\Th_1=\Th_2\circ T\,.
$$
The previous definition, applied to states $\f\in\cs(\carf(J))$,
leads to $\f\circ\Th=\f$, that is $\f$ is even if and only if it is
$\Th$--invariant.

When the
index set $J$ is countable, the CAR algebra $\carf(J)$ is isomorphic to the
$C^{*}$--infinite
tensor product of $J$--copies of $\bm_{2}(\bc)$,
\begin{equation}
\label{jkw}
\carf(J)\sim\overline{\bigotimes_{J}\bm_{2}(\bc)}^{C^*}\,.
\end{equation}
Such an isomorphism is established by a Jordan--Klein--Wigner
transformation, as shown in \cite{T3}, Exercise XIV. We briefly report it for the
convenience of the reader. Fix any enumeration
$j=1,2,\dots$ of the set $J$. Let
$U_{j}:=a_{j}a_{j}^{\dagger}-a_{j}^{\dagger}a_{j}$,
$j=1,2,\dots$\,. Put $V_{0}:=\idd$, ${
V_{j}:=\prod_{n=1}^{j}U_{n}}$, and denote
\begin{align}
\label{kw}
&e_{11}(j):=a_{j}a_{j}^{\dagger}\,,\quad e_{12}(j):=V_{j-1}a_{j}\,,\nn\\
&e_{21}(j):=V_{j-1}a_{j}^{\dagger}\,,\quad e_{22}(j):=a_{j}^{\dagger}a_{j}\,.
\end{align}
$\{e_{kl}(j)\,|\,k,l=1,2\}_{j\in J}$ provides a system of
commuting matrix units in $\carf(\bn)$. In order to
obtain
$$
\carf(\bn)\sim\overline{\bigotimes_{\bn}\bm_2(\bc)}^{C^*}\,,
$$
fix any segment $[1,l]\subset\bn$ and consider the system of matrix units localized in $r\in\bn$
$$
\{\eps_{i_rj_r}(r)\}_{i_rj_r=1,2}\subset\overline{\bigotimes_{\bn}\bm_2(\bc)}^{C^*}\,,
$$
together with
the system of matrix units
$$
\{e_{i_rj_r}(r)\}_{i_rj_r=1,2}\subset\carf(\bn)
$$
arising from the Jordan--Klein--Wigner construction \eqref{kw}.
The above isomorphism is simply described by
\begin{equation}
\label{iscartens}
e_{i_1j_1}(1)\cdots e_{i_lj_l}(l)\mapsto
\eps_{i_1j_1}(1)\otimes\cdots\otimes\eps_{i_lj_l}(l)
\end{equation}
for each $i_k,j_k=1,2$, $k=1,2,\dots,l$ and $l\in\bn$.
\begin{rem}
\label{rrlocc} Notice that, fixed $r\in\bn$, $e_{i_rj_r}(r)$ is
localized in the segment $[1,r]$ and, moreover, $e_{i_ri_r}(r)$ is
always localized in the site $r$. But $e_{i_rj_r}(r)$ is not
necessarily localized in the site $r$ if $i_r\neq j_r$.
\end{rem}
Anyone of such isomorphisms depends on a predefined order of the
countable index set $J$. Thus, it cannot be directly used to
investigate the exchangeable properties of the states under
consideration.

Thanks to \eqref{jkw}, $\carf(J)$ has a unique tracial state $\t$
as the extension of the unique tracial state on $\carf(I)$,
$|I|<+\infty$. Let $I\subset J$ be a finite set and
$\f\in\cs(\carf(J))$. Then there exists a unique positive element
$T\in\carf(I)$ such that
$\f\lceil_{\carf(I)}=\t\lceil_{\carf(I)}(T\,{\bf\cdot}\,)$. The
element $T$ is called the {\it adjusted density matrix} of
$\f\lceil_{\carf(I)}$. For the standard applications to quantum
statistical mechanics, one also uses the density matrix w.r.t.
the unnormalized trace.

We recall here the description of product state (cf. \cite{AM2}).
We start with the case of finite sets. Namely, let $I_1,I_2\subset
J$ with $|I_1|,|I_2|<\infty$ and $I_1\bigcap I_2=\emptyset$. Fix
$\f_{1}\in\cs(\carf(I_1))$, $\f_{2}\in\cs(\carf(I_2))$. If at
least one among them is even, then according to Theorem 1 of
\cite{AM2}, the product state extension (called {\it product
state} for short) $\f\in\cs(\carf(I_1\bigcup I_2))$ is uniquely
defined. We write, with an abuse of notation, $\f=\f_1\f_2$. Let
$T_1\in\carf(I_1)$, $T_2\in\carf(I_2)$ be the adjusted densities
relative to $\f_{1}\in\cs(\carf(I_1))$,
$\f_{2}\in\cs(\carf(I_2))$, respectively. As at least one among
$T_{1}$ and $T_{2}$ is even, $[T_{1},T_{2}]=0$ and $T:=T_{1}T_{2}$
is a well defined positive element of $\carf(I_1\bigcup I_2)$
which is precisely the density matrix of $\f=\f_1\f_2$. The
product state $\f\in\cs(\carf(I_1\cup I_2))$ is even if and only
if $\f_1$ and $\f_2$ are both even.

Now we pass to the
description of the product state on $\carf(J)$ symbolically written as
$$
\f:=\prod_{j\in J}\r\,,
$$
where $\r$ is a single even state on $\bm_2(\bc)\sim\carf(\{j\})$.
For $j\in J$ denote $\io_j:\bm_2(\bc)\to\carf(J)$ the
corresponding embedding. For each finite subset
$I:=\{j_1,\dots,j_{|I|}\}\subset J$, let $\f_I\in\cs(\carf(I))$ be
the product state given, on the elementary generators, by
$$
\f_I(\io_{j_1}(A_1)\cdots\io_{j_{|I|}}(A_{|I|}))=\prod_{k=1}^{|I|}\r(A_{k})\,,
$$
where $A_1,\dots,A_{|I|}\in\bm_2(\bc)$. If $I_1\subset I_2$, it is
immediate to see that $\f_{I_2}\lceil_{\carf(I_1)}=\f_{I_1}$. So
the direct limit $\underrightarrow{\lim } \f_I$, when $I\uparrow
J$ is a well defined state on the dense $*$--algebra of the
localized elements $\carf_0(J)$, which extends by continuity to a
state $\f$ which is the product state of a single even state
$\r\in\bm_2(\bc)$. A necessarily even product state
${\displaystyle\f=\prod_J\r}$ is then uniquely determined by the
even state $\r$, and the next lemma shows that each even state on
$\bm_2(\bc)$ can be seen as a single point of a closed segment.
\begin{lem}
\label{matr1}
For every even state $\r$ on $\bm_2(\bc)$, there
exists a unique $\m\in[0,1]$ such that
\begin{equation*}
\rho
\left(
\begin{array}{ll}
a & b \\
c & d
\end{array}
\right) =\mu a+\left( 1-\mu \right) d
\end{equation*}
\end{lem}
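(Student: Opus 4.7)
The plan is to exploit the action of the parity automorphism $\Th$ on the matrix units of $\bm_2(\bc)\sim\carf(\{j\})$ and then read off the general form of an even state from the diagonal entries.

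First I would identify $\bm_2(\bc)$ with $\carf(\{j\})$ via the Jordan--Klein--Wigner map \eqref{kw}, so that the matrix units correspond to $e_{11}=a_ja_j^{\dagger}$, $e_{22}=a_j^{\dagger}a_j$, $e_{12}=a_j$ and $e_{21}=a_j^{\dagger}$ (here $V_0=\idd$). Under the parity $\Th$, the diagonal units are fixed whereas $\Th(e_{12})=-e_{12}$ and $\Th(e_{21})=-e_{21}$, because $e_{12}$ and $e_{21}$ are odd polynomials in the Fermi generators. Thus the even and odd parts of $\bm_2(\bc)$ correspond respectively to the diagonal and antidiagonal matrices.

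Next, assuming $\r$ is even, i.e. $\r\circ\Th=\r$, I would apply $\r$ to $e_{12}$ and $e_{21}$ to get $\r(e_{12})=-\r(e_{12})$ and $\r(e_{21})=-\r(e_{21})$, hence $\r(e_{12})=\r(e_{21})=0$. Writing an arbitrary matrix as $ae_{11}+be_{12}+ce_{21}+de_{22}$ and using linearity of $\r$ immediately gives
\[
\r\!\left(\begin{array}{ll} a & b \\ c & d \end{array}\right)=\r(e_{11})\,a+\r(e_{22})\,d.
\]
Setting $\m:=\r(e_{11})$ and using $\r(\idd)=\r(e_{11})+\r(e_{22})=1$ one gets the coefficient of $d$ equal to $1-\m$. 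Positivity of the state applied to the projection $e_{11}$ together with $e_{11}\le\idd$ forces $\m\in[0,1]$.

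Uniqueness of $\m$ is transparent from the formula: evaluating both sides on $e_{11}$ yields $\m=\r(e_{11})$, so $\m$ is determined by $\r$. I do not foresee a real obstacle here; the only thing to be careful about is checking the parity action on the specific matrix units $e_{12}$, $e_{21}$ arising from the Jordan--Klein--Wigner transformation, but for the single--site case this is immediate since $V_0=\idd$ and $\Th$ simply reverses the sign of $a_j$ and $a_j^{\dagger}$.
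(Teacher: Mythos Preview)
Your proof is correct. Both your argument and the paper's reach the same conclusion by showing that evenness forces the off-diagonal contributions to vanish, but the routes differ slightly. The paper represents $\r$ via its density matrix $T$ with respect to the trace, writes $T=\left(\begin{smallmatrix}\mu & b+\imath f\\ b-\imath f & 1-\mu\end{smallmatrix}\right)$, uses the positivity constraint $\mu(1-\mu)\geq b^2+f^2$ to get $\mu\in[0,1]$, and then evaluates $\r$ on $e_{12}$ to obtain $b-\imath f=0$. You instead work directly at the level of the state values: you read off the $\bz_2$--grading on the matrix units, apply $\r\circ\Th=\r$ to kill $\r(e_{12})$ and $\r(e_{21})$, and use positivity on the projection $e_{11}$ to bound $\mu$. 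Your approach is a bit more elementary since it avoids the density--matrix machinery; the paper's approach has the mild advantage that it makes the adjusted density matrix of $\r_\mu$ explicit, which is consonant with the surrounding discussion of product states via densities.
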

\begin{proof}
By the usual identification of $\carf(\{j\})$, $j\in J$ with
$\bm_2(\bc)$, one sees that for every state $\r$ on $\bm_2(\bc)$,
there exists a unique positive matrix $T$ such that
$\r=\t(T\,{\bf\cdot}\,)$. In particular $T=\left(
\begin{array}{ll}
\mu  & b+\imath f \\
b-\imath f & 1-\mu
\end{array}
\right) $, with $\m(1-\m)-(b^2+f^2)\geq0$, hence $\m\in[0,1]$. If
$\r$ is even, then $0=\rho \left(
\begin{array}{ll}
0 & 1 \\
0 & 0
\end{array}
\right)= b-\imath f$, hence $b=f=0$. This ends the proof.
\end{proof}
We refer to the above $\m$ and $1-\m$ as the eigenvalues of
the even state $\r_\m$ on $\bm_2(\bc)$, the latter inherited by
the $\bz_2$--grading arising from $\bm_2(\bc)\sim\carf(\{j\})$.

Let $\r_\m$ be an even state as before, and denote $\f_\m$,
$\om_\m$ the corresponding product states on $\ga:=\carf(\bn)$ and
${\displaystyle\gb:=\overline{\bigotimes_{\bn}\bm_2(\bc)}^{C^*}}$,
respectively. Denote $\g:\ga\to\gb$ the isomorphism described via
\eqref{iscartens}.
\begin{lem}
\label{matr2}
Under the above notations, for each $\m\in[0,1]$ we get $\f_\m=\om_\m\circ\g$.
\end{lem}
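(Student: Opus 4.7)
The plan is to verify $\f_\m=\om_\m\circ\g$ on a dense set. Both functionals are continuous states on $\carf(\bn)$ and the $*$-subalgebra $\carf_0(\bn)$ is dense; by the isomorphism \eqref{iscartens} the latter is spanned by monomials $P:=e_{i_1j_1}(1)\cdots e_{i_lj_l}(l)$ in the Jordan--Klein--Wigner units, so I only need to check the identity on such $P$.

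On the tensor side, the definition of $\om_\m$ as the infinite product of $\r_\m$ gives immediately
\[
\om_\m(\g(P))=\prod_{k=1}^l\r_\m(\eps_{i_kj_k}),
\]
which by Lemma \ref{matr1} vanishes as soon as some $i_k\neq j_k$, and otherwise equals $\prod_k\r_\m(\eps_{i_ki_k})$. To match this on the CAR side I would introduce the site-localized matrix units $\tilde e_{ij}(n)\in\carf(\{n\})$ arising from $\carf(\{n\})\cong\bm_2(\bc)$, and exploit the factorization $e_{ij}(n)=V_{n-1}^{\chi(i\neq j)}\tilde e_{ij}(n)$ coming from the Jordan--Klein--Wigner construction \eqref{kw}. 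The key observation is that each $U_k=\tilde e_{11}(k)-\tilde e_{22}(k)$, being a difference of orthogonal projections, is an even self-adjoint unitary, so every $V_{n-1}$ is even and commutes with each operator supported outside the sites $\{1,\dots,n-1\}$. This allows me to slide every individual $U_k$ appearing inside $P$ leftward past the $\tilde e_{i_sj_s}(s)$ with $s\neq k$ until it meets $\tilde e_{i_kj_k}(k)$, producing a factorization
\[
P=W_1W_2\cdots W_l,\qquad W_n\in\carf(\{n\}),
\]
with $W_n=\pm\tilde e_{i_nj_n}(n)$. The product-state formula for $\f_\m$ (applicable on the finite site set $\{1,\dots,l\}$ precisely because all marginals are even) then yields $\f_\m(P)=\prod_n\r_\m(W_n)$. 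Evenness of $\r_\m$ kills this product whenever any $i_n\neq j_n$; in the only surviving case, $i_n=j_n$ for every $n$, all the $U_n$-exponents vanish, the signs disappear, and $\f_\m(P)=\prod_n\r_\m(\tilde e_{i_ni_n})$, matching the tensor-side value.

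The only delicate step is the bookkeeping of the $V$-slides and of the signs in the $W_n$; however, these signs are ultimately harmless, since whenever a nontrivial $V$-contribution appears some index satisfies $i_n\neq j_n$ and evenness of $\r_\m$ forces both sides to vanish, while the surviving contributions occur only in the fully diagonal case where no $V$ is present at all. Continuity and density then extend the identity to the whole of $\carf(\bn)$.
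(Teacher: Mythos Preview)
Your proof is correct and follows essentially the same approach as the paper's. Both arguments reduce to monomials in the Jordan--Klein--Wigner units, use evenness of $\r_\m$ to kill every contribution with some $i_k\neq j_k$, and then invoke the product--state formula together with the site--localization of the diagonal units $e_{i_ki_k}(k)$ (Remark~\ref{rrlocc}) to match the tensor side. The paper compresses the vanishing step into the single line $\f_\m(e_{i_1j_1}(1)\cdots e_{i_lj_l}(l))=\f_\m(e_{i_1i_1}(1)\cdots e_{i_li_l}(l))\prod_k\d_{i_kj_k}$, justified only by ``$\f_\m$ even and Remark~\ref{rrlocc}''; your explicit $U_k$--sliding and the factorization $P=W_1\cdots W_l$ with $W_n=\pm\tilde e_{i_nj_n}(n)$ is precisely a detailed unpacking of that line.
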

\begin{proof}
By the definition of the product states on $\ga$ and $\gb$, it is
enough to check the result for the system of the matrix units. Let
$\{e_{kl}(j)|k,l=1,2\}_{j\in\bn}$,
$\{\eps_{kl}(j)|k,l=1,2\}_{j\in\bn}$ be the canonical systems of
matrix units of $\carf(\bn)$ and
${\overline{\bigotimes_{\bn}\bm_2(\bc)}^{C^*}}$ described above.
By taking into account that $\f_\m$ is even and Remark
\ref{rrlocc}, we get for the restrictions to any segment $[1,l]$,
\begin{align*}
\f_\m(e_{i_1j_1}(1)\cdots e_{i_lj_l}(l))&=\f_\m(e_{i_1i_1}(1)\cdots e_{i_li_l}(l))
\d_{i_1j_1}\cdots \d_{i_lj_l}\\
=&\r_\m(e_{i_1i_1}(1))\cdots\r_\m(e_{i_li_l}(l))\\
=&\om_\m(\eps_{i_1i_1}(1)\otimes\cdots\otimes\eps_{i_li_l}(l))
\d_{i_1j_1}\cdots \d_{i_lj_l}\\
=&\om_\m(\eps_{i_1j_1}(1)\otimes\cdots\otimes\eps_{i_lj_l}(l))\,.
\end{align*}
\end{proof}

\section{the group of the permutations and its action on the CAR algebra}

We firstly present a result, probably known to the experts, crucial
in the sequel. Let
$$
G=\bigcup_{\a\in A}G_\a\,,
$$
where $A$ is a directed set, and $G_\a$, $\a\in A$ are finite
subgroups of the group $G$ such that $\a<\b$ implies $G_\a\subset
G_\b$. Consider a unitary representation $\{U(g)\mid\, g\in G\}$ of
$G$ acting on a Hilbert space $\ch$. Denote $E_\a$, $E$ the
selfadjoint projections onto the subspaces of $\ch$ consisting of
the invariant vectors under the action of $G_\a$ and $G$,
respectively. Of course, the net $\{E_\a\mid \,\a\in A\}$ is
decreasing. It is straightforward to see (cf. \cite{SZ}, Section 2.17) that it converges in the
strong operator topology to a projection
$$
P:=\slim_\a E_\a\,.
$$
In general, $P\geq E$. In addition, it is a standard fact to verify that
$$
E_\a=\frac1{|G_\a|}\sum_{g\in G_\a}U(g)\,.
$$
Here, we give the analogue of the von Neumann Ergodic Theorem  for
the case under consideration.
\begin{prop}
\label{fn} Under the above notations, we get
$$
\slim_\a E_\a=E\,.
$$
\end{prop}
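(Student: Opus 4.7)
The plan is to establish the nontrivial inequality $P\le E$, since $P\ge E$ has already been noted in the excerpt. The central observation is that because $\{E_\a\}_{\a\in A}$ is a decreasing net of orthogonal projections, its strong operator limit $P$ satisfies $P\le E_\a$ for every $\a\in A$. To see this, fix $\a_0$ and let $\b\ge\a_0$; since $G_{\a_0}\subset G_\b$, the space of $G_\b$-invariant vectors is contained in the space of $G_{\a_0}$-invariant vectors, so $E_\b\le E_{\a_0}$. Passing to the strong limit along $\b$ preserves the inequality between self-adjoint operators, yielding $P\le E_{\a_0}$.

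Given this, the rest is formal. Take $\xi\in\text{Range}(P)$, so $P\xi=\xi$. For each $\a$, the inequality $P\le E_\a$ implies $E_\a P=P$, whence $E_\a\xi=E_\a P\xi=P\xi=\xi$. In other words, $\xi$ lies in the range of $E_\a$ for every $\a$, and is therefore $G_\a$-invariant; equivalently, $U(g)\xi=\xi$ for every $g\in G_\a$ and every $\a\in A$. Since $G=\bigcup_\a G_\a$, every $g\in G$ belongs to some $G_\a$, and so $U(g)\xi=\xi$ for all $g\in G$. Consequently $\xi\in\text{Range}(E)$, proving $P\le E$, and combining with $P\ge E$ yields $P=E$.

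I do not anticipate a substantive obstacle: the argument amounts to a direct order-theoretic manipulation in the projection lattice combined with the directed-union structure $G=\bigcup_\a G_\a$. The only minor point worth stating carefully is the passage from the strong convergence of a decreasing net of projections to the pointwise order inequality $P\le E_\a$, which is the standard fact that monotone strong limits are compatible with the order on self-adjoint operators. Alternatively, one may phrase the whole argument by identifying $\text{Range}(P)$ with $\bigcap_\a\text{Range}(E_\a)$, which is manifestly the space of $G$-invariant vectors.
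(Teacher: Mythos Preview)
Your proof is correct. Both your argument and the paper's establish $P\le E$ by showing that any $\xi$ in the range of $P$ is $G$-invariant, but the mechanisms differ. The paper works with the explicit averaging formula $E_\a=\frac{1}{|G_\a|}\sum_{g\in G_\a}U(g)$: starting from $\xi=\lim_\a E_\a\xi$, it fixes $h\in G$, uses that eventually $h\in G_\a$, and applies a change of variables in the sum to conclude $U(h)\xi=\xi$. Your approach instead exploits the order structure of the projection lattice: from the fact that a decreasing net of projections converges strongly to its infimum you extract $P\le E_\a$ for every $\a$, hence $E_\a\xi=\xi$ exactly (not merely in the limit), and $G_\a$-invariance of $\xi$ is then immediate from the definition of $E_\a$. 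Your route is slightly more economical---it bypasses both the averaging formula and the change-of-variables computation---while the paper's argument has the mild advantage of making the role of the group structure (left translation on $G_\a$) explicit. Either way the directed-union hypothesis $G=\bigcup_\a G_\a$ is what closes the argument.
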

\begin{proof}
We have only to prove that $P\leq E$. Fix $h\in G$ and $\xi\in\ch$
such that
$$
\xi=\lim_\a\frac1{|G_\a|}\sum_{g\in G_\a}U(g)\xi\,.
$$
As $A$ is a directed set and the sequence of the groups
$\{G_\a\mid\, \a\in A\}$ is increasing,  there exists $\a_h \in A$
such that $\a>\a_h$ implies $h\in G_\a$. Then, after a standard
change of variables in the sum, we obtain
\begin{align*}
U(h)\xi=&U(h)\lim_\a\frac1{|G_\a|}\sum_{g\in G_\a}U(g)\xi
=U(h)\lim_{\a>\a_h}\frac1{|G_\a|}\sum_{g\in G_\a}U(g)\xi\\
=&\lim_{\a>\a_h}\frac1{|G_\a|}\sum_{g\in G_\a}U(hg)\xi
=\lim_{\a>\a_h}\frac1{|G_\a|}\sum_{g\in G_\a}U(g)\xi\\
=&\lim_\a\frac1{|G_\a|}\sum_{g\in G_\a}U(g)\xi=\xi\,.
\end{align*}
\end{proof}

We now introduce and recall some notations and definitions which
will be used throughout the paper.
Let $G$ be a group and $\ga$ a $C^*$--algebra which we suppose
always to be unital. One says that $G$ acts as a group of
automorphisms of $\ga$ if there is a representation $\a: g\in
G\mapsto\a_g\in\aut(\ga)$. The state $\f\in\cs(\ga)$ is called
$G$--invariant if $\f=\f\circ\a_g$ for each $g\in G$. The subset
$\cs_G(\ga)$ of the $G$--invariant states is $*$--weakly compact
in $\cs(\ga)$, and its extremal points are called {\it ergodic
states} (w.r.t.the action of $G$). For $G$ acting as a group of
automorphisms of $\ga$ and a state $\f\in\cs_G(\ga)$,
$(\pi_\f,\ch_\f,U_\f,\Omega_\f)$ is the GNS covariant quadruple
canonically associated to $\f$ (see, e.g. \cite{BR1, T1}). If
$(\pi_\f,\ch_\f,\Omega_\f)$ is the GNS triple associated to $\f$,
the unitary representation $U_\f$ of $G$ on $\ch_\f$ is uniquely
determined by
\begin{align*}
&\pi_\f(\a_g(A))=U_\f(g)\pi_\f(A)U_\f(g)^{-1}\,,\\
&U_\f(g)\Om_\f=\Om_\f\,,\quad A\in\ga\,, g\in G\,.
\end{align*}
If $\f\in\cs_G(\ga)$, by $\gb_G(\f):=(\gz_\f)^\a$ we denote the
fixed point algebra of the center
$$
\gz_\f:=\pi_\f(\ga)''\bigcap\pi_\f(\ga)'
$$
under the adjoint action $\ad(U_\f)$ of $G$. We will refer to the set
$$
\ch_\f^{G}:=\{\xi\in\ch_\f\mid\, U_\f(g)\xi=\xi\,,g\in G\}\
$$
and $E_\f$ as the (closed) subspace of $\ch_\f$ of the invariant vectors
w.r.t. the action of $G$, and the relative selfadjoint projection onto it, respectively.

Let $(\ga,G)$ be a $C^*$--dynamical system as above, together with
$\f\in\cs_G(\ga)$. The invariant state $\f$ is said to be
$G$--{\it abelian} if all the operators $E_\f\pi_\f(\ga)E_\f$
mutually commute. The $C^*$--dynamical system $(\ga,G)$ is
$G$--abelian if $\f$ is $G$--abelian for each $\f\in\cs_G(\ga)$.

Let $J$ be any set. By definition the group of the permutations
$\bp_J$ of $J$ is made by those permutations leaving fixed all the
elements of $J$ but a finite number of them. Then it is the direct
limit of the (sub)groups of the permutations $\bp_I$, $I$ running
on all the finite subsets of $J$, that is
$$
\bp_J:=\bigcup\{\bp_I\mid\, I\subset J\,\text{finite}\,\}\,.
$$

It is expected that the group of the permutations $\bp_J$ acts, in
a natural way, as a group of automorphisms of $\carf(J)$. However
this is the case, according to the following
\begin{prop}
The map $g\in\bp_J\mapsto a_{g^{-1}j}\in\carf(J)$, $j\in J$, extends to an action
$g\in\bp_J\mapsto\a_g\in\aut(\carf(J))$ by automorphisms of
$\carf(J)$.
\end{prop}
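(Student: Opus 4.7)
The plan is to appeal to the universal property of the CAR algebra. Since $\carf(J)$ is the enveloping $C^*$-algebra of the unital $*$-algebra freely generated by symbols $\{a_j,a_j^\dagger\}_{j\in J}$ modulo the CAR relations, any family of elements in a unital $C^*$-algebra satisfying those same relations extends uniquely to a unital $*$-homomorphism out of $\carf(J)$. As the CAR relations are manifestly preserved by any bijective relabelling of the index set, a permutation $g\in\bp_J$ ought to produce an automorphism in a purely formal way.

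Concretely, I would fix $g\in\bp_J$ and set $b_j:=a_{g^{-1}j}$, $b_j^\dagger:=a^\dagger_{g^{-1}j}$ for each $j\in J$, and verify the CAR relations for this new family. Since $g^{-1}$ is a bijection of $J$, one has $\d_{g^{-1}j,\,g^{-1}k}=\d_{jk}$, so that
\begin{equation*}
\{b_j^\dagger,b_k\}=\{a^\dagger_{g^{-1}j},a_{g^{-1}k}\}=\d_{g^{-1}j,\,g^{-1}k}\idd=\d_{jk}\idd,
\end{equation*}
together with $\{b_j,b_k\}=\{b_j^\dagger,b_k^\dagger\}=0$ and $(b_j)^*=b_j^\dagger$. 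The universal property then yields a unique unital $*$-homomorphism $\a_g:\carf(J)\to\carf(J)$ with $\a_g(a_j)=a_{g^{-1}j}$ for every $j\in J$.

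To show that $\a_g$ is actually an automorphism, I would apply the same construction to $g^{-1}$, obtaining $\a_{g^{-1}}$. Since both $\a_g\circ\a_{g^{-1}}$ and $\a_{g^{-1}}\circ\a_g$ fix every generator $a_j$, the uniqueness clause of the universal property forces them to coincide with $\id_{\carf(J)}$; hence $\a_g\in\aut(\carf(J))$. The compatibility of $g\mapsto\a_g$ with the group law is then read off from its action on generators by the same uniqueness argument.

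The only genuine content is the existence of the $*$-endomorphism $\a_g$, which is delivered entirely by the universal property; no obstacle of a harder nature arises. Should one wish to bypass the abstract argument, an equivalent route is to realise $\carf(J)$ faithfully on the antisymmetric Fock space $\cf$ over $\ell^{2}(J)$ via $a_j\mapsto a(\d_j)$, lift the permutation unitary $U_g\d_j:=\d_{gj}$ on $\ell^{2}(J)$ to its second quantisation $\G(U_g)$ on $\cf$, and define $\a_g:=\ad\G(U_{g^{-1}})$; the covariance $\G(U)a(\xi)\G(U)^*=a(U\xi)$ then reproduces the prescribed formula on the generators.
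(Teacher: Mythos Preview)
Your proof is correct. Your primary route via the universal property of $\carf(J)$ is a genuinely different argument from the paper's: the paper simply identifies $\carf(J)$ with the CAR algebra $\ga(\ell^2(J))$ over the Hilbert space $\ell^2(J)$ (in the sense of \cite{BR2}, Section~5.2.2) and observes that the permutation of indices is a unitary on $\ell^2(J)$, so that $\bp_J$ acts by Bogoliubov automorphisms. This is exactly the alternative Fock--space route you sketch at the end. Your universal--property argument is more elementary and self--contained: it needs nothing beyond the defining relations and the uniqueness clause, and it makes transparent why bijectivity of $g^{-1}$ is the only ingredient required. The paper's approach, on the other hand, places the action inside a well--developed framework (Bogoliubov automorphisms), which costs an external reference but immediately supplies norm--continuity, covariance with second quantisation, and the like, should one need them later.
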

\begin{proof}
By a standard argument, we have $\carf(J)\equiv\ga(\ell^2(J))$,
that is the CAR algebra on $\ell^2(J)$ under the notations in Section
5.2.2 of \cite{BR2}. Our action on the indices is nothing but a
unitary action on $\ell^2(J)$. This means that $\bp_J$ acts as a
group of Bogoliubov automorphisms of $\carf(J)$.
\end{proof}
We now report a result crucial in the sequel.\footnote{Compare with the connected estimation in Theorem 13 of \cite{DF}, concerning the classical case.} Denote
${\bf n}:=\{1,\dots,n\}$ the finite set made of exactly $n$
elements. If $m\leq n$, ${\bf m}$ can be considered, in a
canonical way, as a subset of ${\bf n}$.
\begin{lem}
\label{stiro}
Let $1\leq m,n<N$. Then, for some constant  $c(m,n)$ depending only on $m,n$, we have
\begin{equation*}
\frac{\left|\{g\in\bp_ {{\bf N}}\mid\,{\bf m}\cap g{\bf
n}\neq\emptyset\}\right|}{(N-1)!}\leq c(m,n)\,.
\end{equation*}
\end{lem}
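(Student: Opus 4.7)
The plan is to apply a straightforward union bound. First I would observe that the event ${\bf m} \cap g{\bf n} \neq \emptyset$ occurs if and only if there exists a pair $(i,j) \in {\bf m} \times {\bf n}$ with $g(j) = i$, which gives the decomposition
\[
\{g \in \bp_{{\bf N}} \mid {\bf m} \cap g{\bf n} \neq \emptyset\} = \bigcup_{i \in {\bf m},\, j \in {\bf n}} \{g \in \bp_{{\bf N}} \mid g(j) = i\}.
\]
Since ${\bf N}$ is finite, $\bp_{{\bf N}}$ coincides with the full symmetric group on $\{1,\dots,N\}$, and for each fixed pair $(i,j)$ the set $\{g \in \bp_{{\bf N}} \mid g(j) = i\}$ has cardinality exactly $(N-1)!$, since the restriction of $g$ to ${\bf N}\setminus\{j\}$ can be chosen as an arbitrary bijection onto ${\bf N}\setminus\{i\}$.

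A union bound over the $mn$ pairs in ${\bf m} \times {\bf n}$ then immediately yields
\[
\big|\{g \in \bp_{{\bf N}} \mid {\bf m} \cap g{\bf n} \neq \emptyset\}\big| \leq mn \cdot (N-1)!,
\]
so that dividing by $(N-1)!$ gives the stated bound with the explicit constant $c(m,n) := mn$.

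I do not foresee any serious obstacle: the lemma is a purely combinatorial counting estimate, uncoupled from the CAR structure, and the union bound already suffices. A sharper value of $c(m,n)$ could be extracted either by inclusion--exclusion, or by computing the complementary count directly as $\frac{(N-m)!(N-n)!}{(N-m-n)!}$ (valid when $m+n \leq N$), but the elementary union--bound argument above already delivers a constant independent of $N$, which is all the statement requires.
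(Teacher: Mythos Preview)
Your argument is correct and in fact cleaner than the route taken in the paper. The paper computes the complementary count exactly as $A=\dfrac{(N-m)!(N-n)!}{(N-m-n)!}$ and then invokes Stirling's formula to show that $1-\dfrac{A}{N!}\sim\dfrac{mn}{N}$ as $N\to\infty$, from which the existence of a constant $c(m,n)$ is inferred. Your union bound sidesteps all of this: it is purely combinatorial, gives the explicit constant $c(m,n)=mn$ valid for every $N$ (not just asymptotically), and requires no analytic approximation. What the paper's approach buys is the information that the ratio is asymptotically equal to $mn$, so your union bound is in fact sharp in the limit; but since the lemma only asks for \emph{some} constant independent of $N$, your direct counting is both sufficient and arguably more rigorous, as it avoids the implicit ``asymptotic plus finitely many cases'' step.
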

\begin{proof}
Set $A:=\left|\{g\in\bp_ {{\bf N}}\mid\,{\bf m}\cap g{\bf
n}\neq\emptyset\}^c\right|$, and
\begin{align*}
\G=&(N-m)[\ln(N-m)-1]+(N-n)[\ln(N-n)-1]\\
-&(N-m-n)[\ln(N-m-n)-1]-N(\ln N-1)\,.
\end{align*}
It is straightforwardly seen that
$$
\frac{A}{N!}=\frac{\left( N-m\right) !(N-n)!}{(N-m-n)!N!}
\approx\sqrt{\frac{(N-m)(N-n)}{N(N-m-n)}}e^{\G}\,,
$$
after using the Stirling formula, for $N\to+\infty$. By retaining
only the leading terms up to the order $1/N$, we get
$$
\frac{A}{N!}\approx e^{-\frac{mn}N}\approx 1-\frac{mn}N
$$
which leads to
$$
\frac{\left|\{g\in\bp_ {{\bf N}}\mid\,{\bf m}\cap g{\bf
n}\neq\emptyset\}\right|}{N!} =1-\frac{A}{N!}\approx\frac{mn}N\,.
$$
\end{proof}

We end the section reporting the definition (cf. \cite{St1},
Definition 3.3) concerning the action of a group as a {\it Large
Group of Automorphisms}.
\begin{defin}
\label{lgau} Let $g\in G\mapsto\a_g\in\aut(\ga)$ be an action of a
group $G$ on the $C^*$--algebra $\ga$. We say that $G$ is
represented (or acts) as a large group of automorphisms if, for
each selfadjoint $A$ and each $\f\in\cs_{G}(\ga)$
$$
\overline{\conv\left(\{\pi_\f(\a_g(A))\mid \,g\in
G\}\right)}\bigcap\pi_\f(\ga)'\neq\emptyset\,.
$$
\end{defin}
In the next section we will establish that $\bp_J$ acts on
$\carf(J)$ as a large group of automorphisms (cf. Theorem
\ref{mainer1}). We underline that this property is not directly used
for the main result of the paper concerning the structure of the
symmetric (i.e. invariant under the action of $\bp_J$) states. It
comes out only for constructing a conditional expectation from the
GNS von Neumann algebra of $\carf(J)$ onto the invariant elements
of the center, that allows to obtain some convergence results
which may have some interest in general (see Proposition
\ref{avlgu} and Proposition \ref{scaz}).

\section{symmetric states on the CAR algebra}

A state $\f\in\cs(\carf(J))$ is called {\it symmetric} if it is
invariant under the action of the group $\bp_J$ of all the finite
permutations of the set $J$. Following the notation introduced
above, $\cs_{\bp_J}(\carf(J))$ denotes the $*$--weakly compact
subset of all the symmetric states of $\carf(J)$. Furthermore we
refer to $\ce\left(\cs_{\bp_J}(\carf(J))\right)$ as the set of all
the extremal symmetric states, that is the invariant states which are
ergodic w.r.t. the action of $\bp_J$.

In the section we investigate some of the basic ergodic properties
enjoyed by the symmetric states. To this aim, denote $M$ the
Cesaro Mean w.r.t. $\bp_J$, given for a generic object $f(g)$ by
\begin{equation*}
M\{f(g)\}:=\lim_{I\uparrow J}\frac1{|\bp_I|}\sum_{g\in \bp_I}f(g)\,,
\end{equation*}
provided the l.h.s. exists in the appropriate sense. As usual,
$I\subset J$ runs over all the finite parts of $J$.
\begin{thm}
\label{mainer}
Let $\f\in\cs_{\bp_J}(\carf(J))$. Then the
following assertions hold true.
\begin{itemize}
\item[(i)] The state $\f$ is even. \item[(ii)] The state $\f$ is
asymptotically Abelian in average:
$$
M\{\f(C[\a_g(A),B]D)\}=0\,,\quad A,B,C,D\in\carf(J)\,.
$$
\item[(iii)] $\f\in\ce\left(\cs_{\bp_J}(\carf(J))\right)$ if and only if it is weakly clustering:
$$
M\{\f(\a_g(A)B)\}=\f(A)\f(B)\,,\quad A,B\in\carf(J)\,.
$$
\end{itemize}
\end{thm}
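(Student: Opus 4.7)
The plan is to establish (i), (ii), (iii) in that order, each step leaning on the previous.

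For (i), by continuity and linearity it suffices to show $\f(A)=0$ for every odd localized $A\in\carf{}_0(J)_-$ supported in some finite $I_0\subset J$. Set $C:=A+A^*$, a self-adjoint odd element. Since $J$ is infinite, one can pick for every $n$ permutations $g_1=\id,g_2,\dots,g_n\in\bp_J$ with $g_k(I_0)$ pairwise disjoint, so that the translates $C_k:=\a_{g_k}(C)$ are self-adjoint, odd, and localized in mutually disjoint regions; in the CAR algebra two odd elements at disjoint supports anticommute, hence $\{C_j,C_k\}=0$ for $j\neq k$ and $(\sum_kC_k)^2=\sum_kC_k^2$. Invoking $\bp_J$-invariance of $\f$ and the state Cauchy-Schwarz inequality,
\begin{equation*}
n^2\f(C)^2=\big|\f\big(\textstyle\sum_k C_k\big)\big|^2\leq\f\big(\textstyle(\sum_k C_k)^2\big)=n\,\f(C^2),
\end{equation*}
so $\f(C)=0$ as $n\to\infty$. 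Repeating with $\imath(A-A^*)$ disposes of the imaginary part and yields $\f(A)=0$.

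For (ii) I use (i) to restrict attention to $A,B,C,D$ localized and of pure parity. If $A$ is even, or $A$ odd and $B$ even, then $[\a_g(A),B]=0$ for every $g$ sending $\supp A$ disjoint from $\supp B\cup\supp C\cup\supp D$, and by Lemma \ref{stiro} such $g$ exhaust $\bp_I$ in density, so the mean is $0$. In the remaining case $A,B$ both odd, the same disjointness regime gives $[\a_g(A),B]=-2B\a_g(A)$, and sliding $\a_g(A)$ past $D$ via $\a_g(A)D=\Th(D)\a_g(A)$ produces $C[\a_g(A),B]D=-2\,CB\,\Th(D)\,\a_g(A)$; passing through the GNS triple and Proposition \ref{fn}, the problem reduces to showing $E_\f\pi_\f(A)\Om_\f=0$ for every odd $A$. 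The main obstacle appears here: $\f$ is not tracial, so individual $\f(A^*\a_k(A))$ need not vanish for $k$ displacing $\supp A$. The trick is to pair
\begin{equation*}
a_I:=\|E_{\bp_I}\pi_\f(A)\Om_\f\|^2=\tfrac1{|\bp_I|}\!\sum_{k\in\bp_I}\f(A^*\a_k(A))\geq 0
\end{equation*}
with $b_I:=\|E_{\bp_I}\pi_\f(A^*)\Om_\f\|^2=\tfrac1{|\bp_I|}\sum_k\f(\a_k(A)A^*)\geq 0$: the anticommutation $\{A^*,\a_k(A)\}=0$ for every $k$ displacing $\supp A$ antisymmetrizes their sum termwise to $0$, leaving only residual overlap contributions controlled by Lemma \ref{stiro}. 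Thus $a_I+b_I\to 0$, hence $a_I\to 0$.

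For (iii), the same GNS manipulation plus Proposition \ref{fn} yields
\begin{equation*}
M\{\f(\a_g(A)B)\}=\langle E_\f\pi_\f(A^*)\Om_\f,\pi_\f(B)\Om_\f\rangle,
\end{equation*}
so weak clustering is equivalent to $E_\f\pi_\f(A^*)\Om_\f=\f(A^*)\Om_\f$ for every $A$, i.e., to $E_\f$ being the rank-one projection onto $\bc\Om_\f$ by density of $\pi_\f(\carf(J))\Om_\f$. Rank-one-ness of $E_\f$ is in turn equivalent to ergodicity of $\f$: a unit invariant vector $\eta\perp\Om_\f$ produces a $\bp_J$-invariant state $\langle\eta,\pi_\f({\bf\cdot})\eta\rangle$ distinct from $\f$ and dominated by a scalar multiple of $\f$, contradicting extremality, while the converse falls under the $\bp_J$-abelian case of the standard theory (cf.\ \cite{BR1}), which is available through (i) + (ii).
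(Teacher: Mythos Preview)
Your proof is correct, and parts (i) and (ii) take a genuinely different route from the paper.

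For (i), the paper works entirely in the GNS representation: it shows
\[
\{E_\f\pi_\f(A)E_\f,\,E_\f\pi_\f(A^*)E_\f\}=M\big\{E_\f\pi_\f(\{A,\a_g(A^*)\})E_\f\big\}=0
\]
via Proposition~\ref{fn} and Lemma~\ref{stiro}, deducing $E_\f\pi_\f(A)E_\f=0$ for odd localized $A$ and hence $\f(A)=0$. Your Cauchy--Schwarz argument with $n$ disjoint translates is more elementary: it avoids Proposition~\ref{fn} and the GNS machinery altogether and reaches $\f(A)=0$ directly from the anticommutation relations.

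For (ii), the paper first records the operator identity $[E_\f\pi_\f(A)E_\f,\,E_\f\pi_\f(B)E_\f]=0$ whenever $A$ or $B$ is even (equation~\eqref{stirak}), and then treats all parity cases by sliding $\a_g(A)$ to the ends and swapping through that commutation. Your odd--odd case instead reduces the mean to $\langle E_\f\pi_\f(A)\Om_\f,\,\cdot\,\rangle$ and proves $E_\f\pi_\f(A)\Om_\f=0$ for odd $A$ by the nonnegativity pairing $a_I+b_I\to0$. This is a different and rather neat mechanism. The paper's route has the advantage of producing \eqref{stirak} as a byproduct, which it reuses verbatim in Theorem~\ref{mainer1} for $\bp_J$--abelianness; your route yields the vector identity instead, which still suffices (since (ii) itself already forces $E_\f\pi_\f(\carf(J))E_\f$ to be commutative).

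For (iii) your argument and the paper's coincide in substance (weak clustering $\Leftrightarrow$ $E_\f$ rank one $\Leftrightarrow$ ergodicity, the nontrivial direction requiring $\bp_J$--abelianness), but your exposition has a wobble. The implication that actually needs $\bp_J$--abelianness is ``ergodic $\Rightarrow$ rank one'' (Sakai, Proposition~3.1.12), not its converse; and your direct attempt at that implication---claiming the vector state $\langle\pi_\f(\,\cdot\,)\eta,\eta\rangle$ is dominated by a multiple of $\f$---is unjustified as stated, since an invariant unit vector $\eta$ need not lie in $\pi_\f(\carf(J))'\Om_\f$. Since you do correctly invoke $\bp_J$--abelianness and defer to the standard theory anyway, this is a presentational slip rather than a genuine gap: simply swap which direction you attribute to the cited result.
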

\begin{proof}
(i) Let $A$ be localized and odd. Proposition \ref{fn} gives
\begin{align*}
&\{E_\f\pi_\f(A)E_\f,E_\f\pi_\f(A^*)E_\f\}\\
=M\{E_\f\pi_\f(A)U_\f(g)&\pi_\f(A^*)E_\f+E_\f\pi_\f(A^*)U_\f(g)\pi_\f(A)E_\f\}\\
=&M(E_\f\pi_\f(\{A,\a_g(A^*)\})E_\f)
\end{align*}
By Lemma \ref{stiro} and the CAR relations, one finds that the quantity
above is equal to zero, where the
limit in the Cesaro mean is understood in the strong operator
topology.\footnote{The analogous case based on the spatial translations has been
treated in \cite{BR2}, Example 5.2.21, where the particular form of the action of $\bz^d$
on $\carf(\bz^d)$ as the shift, allows
to reach directly the result without using Lemma \ref{stiro}.}
This implies that $E_\f\pi_\f(A)E_\f=0$, that is
$$
\f(A)=\langle E_\f\pi_\f(A)E_\f\Omega_\f,\Omega_\f\rangle=0\,.
$$
Hence $\f$ vanishes on the localized odd elements. Since
$\carf(J)\sim\carf_{+}(J)\bigoplus\carf_{-}(J)$ as a Banach space,
we can approximate a generic odd element $A$ with a sequence
$\{A_n\}_{n\in\bn}$ made by odd and localized elements. By the
above result, one obtains
$$
\f(A)=\f(\lim_nA_n)=\lim_n\f(A_n)=0\,.
$$
This means that $\f$ vanishes on all the odd elements, that is $\f$ is even.

(ii) The same computations as before show that, if $A$ or $B$ is even, then
\begin{equation}
\label{stirak}
[E_\f\pi_\f(A)E_\f,E_\f\pi_\f(B)E_\f]=0\,.
\end{equation}
By a standard approximation argument, we can reduce the matter to
localized elements. Fix $A$ even. Proposition \ref{fn}, Lemma
\ref{stiro} and \eqref{stirak}, give
\begin{align*}
&M\{\f(C\a_g(A)BD)\}=M\{\f(\a_g(A)CBD)\}\\
=&\langle\pi_\f(A)E_\f\pi_\f(CBD)\Om_\f,\Om_\f\rangle
=\langle\pi_\f(CBD)E_\f\pi_\f(A)\Om_\f,\Om_\f\rangle\\
=&M\{\f(CBD\a_g(A))\}=M\{\f(CB\a_g(A)D)\}\,.
\end{align*}
By considering $A$ odd and splitting $C$, $D$ in their even and
odd parts, the result is reached by similar computations as above.

(iii) The weak clustering condition is equivalent to
$\dim(\ch_\f^{\bp_J})=1$, and it is immediate to show that implies
ergodicity (see \cite{S}, Proposition 3.1.10). The converse
assertion follows from Theorem \ref{mainer1}, that is $\carf(J)$ is
$\bp_J$--abelian (see \cite{S}, Proposition 3.1.12).
\end{proof}
\begin{thm}
\label{mainer1} The group $\bp_J$ acts as a Large Groups of
Automorphisms on $\carf(J)$, and the $C^*$--dynamical system
$(\carf(J), \bp_J, \a)$ is $\bp_J$--abelian.
\end{thm}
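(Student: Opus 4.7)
The statement has two independent parts, both of which follow from Theorem \ref{mainer} by essentially standard manipulations. The plan is to settle $\bp_J$--Abelianness using parts (i) and (ii) of that theorem, and then, for the Large Group property, to exhibit the required element of $\pi_\f(\carf(J))'$ as a weak--operator limit point of finite Cesaro averages over $\bp_I$.

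For $\bp_J$--Abelianness, fix $\f\in\cs_{\bp_J}(\carf(J))$ and $A,B\in\carf(J)$, and decompose $A=A_++A_-$, $B=B_++B_-$ according to the parity grading. The anticommutator computation opening the proof of Theorem \ref{mainer}(i) in fact establishes that, for every odd localized $X$,
$$
\bigl\{E_\f\pi_\f(X)E_\f,\,(E_\f\pi_\f(X)E_\f)^*\bigr\}=0\,,
$$
and since a sum of two positive operators is zero only if each summand vanishes, this forces $E_\f\pi_\f(X)E_\f=0$. Norm continuity of $X\mapsto E_\f\pi_\f(X)E_\f$ together with density of the odd localized elements in $\carf(J)_-$ extends this to every odd $X$, so $E_\f\pi_\f(A)E_\f=E_\f\pi_\f(A_+)E_\f$ and similarly for $B$. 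The commutativity of these two reduced operators now reduces to identity \eqref{stirak} from the proof of Theorem \ref{mainer}(ii), applied to the even pair $(A_+,B_+)$.

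For the Large Group property, fix self--adjoint $A\in\carf(J)$ and $\f\in\cs_{\bp_J}(\carf(J))$, and consider the Cesaro averages
$$
T_I:=\frac{1}{|\bp_I|}\sum_{g\in\bp_I}\pi_\f(\a_g(A))\,,\qquad I\subset J\text{ finite}\,,
$$
which are self--adjoint, lie in $\conv\{\pi_\f(\a_g(A))\mid g\in\bp_J\}$, and have norm bounded by $\|A\|$. By weak--operator compactness of the $\|A\|$--ball in $B(\ch_\f)$, some subnet $T_{I_\alpha}$ converges weakly to an operator $B$, which therefore sits in the weak--operator closure of that convex hull. To verify $B\in\pi_\f(\carf(J))'$, cyclicity of $\Om_\f$ reduces matters to checking that, for all $C,D,E\in\carf(J)$,
$$
\bigl\langle\bigl[T_{I_\alpha},\pi_\f(C)\bigr]\pi_\f(D)\Om_\f,\,\pi_\f(E^*)\Om_\f\bigr\rangle
=\frac{1}{|\bp_{I_\alpha}|}\sum_{g\in\bp_{I_\alpha}}\f\bigl(E\,[\a_g(A),C]\,D\bigr)
$$
tends to $0$; but the right--hand side converges, as $I_\alpha\uparrow J$, to $M\{\f(E[\a_g(A),C]D)\}=0$ by Theorem \ref{mainer}(ii). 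Passing to the weak limit on the left and invoking density of $\pi_\f(\carf(J))\Om_\f$ in $\ch_\f$ then yields $[B,\pi_\f(C)]=0$ for every $C$.

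All the genuinely nontrivial work has been done in Theorem \ref{mainer}, whose proof in turn rests on the Stirling--type estimate of Lemma \ref{stiro}, so no new obstacle is expected. The only bookkeeping point is topological: Definition \ref{lgau} does not specify the topology in which the closure is taken, but under the customary convention (weak--operator closure in the GNS representation, as is standard in the ergodic analysis of $C^*$--dynamical systems) the element $B$ constructed above lies in the intended closed convex hull, completing the argument.
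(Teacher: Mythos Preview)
Your argument is correct. For $\bp_J$--Abelianness you unpack exactly what the paper means when it cites \eqref{stirak} together with part (i) of Theorem~\ref{mainer}: the vanishing of $E_\f\pi_\f(X)E_\f$ for odd $X$ is established inside the proof of (i), and then \eqref{stirak} handles the even--even piece. So there is no difference on that half.

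For the Large Group property the paper takes a shorter route: it verifies the hypothesis of St{\o}rmer's criterion (Theorem~3.5 of \cite{St1}) by exhibiting, for given $A,B_1,\dots,B_n,C$ and $\eps>0$, a single finite Cesaro average $\frac{1}{|\bp_I|}\sum_{g\in\bp_I}\a_g(A)$ making all the quantities $|\f(C^*[\,\cdot\,,B_k]C)|$ small, and then invokes that theorem as a black box. You instead bypass \cite{St1} entirely and construct the commutant element directly as a weak--operator limit point of the same averages, using Theorem~\ref{mainer}(ii) to kill the commutators in the limit. Your approach is more self--contained and is essentially what the proof of St{\o}rmer's Theorem~3.5 does internally; the paper's approach is terser but imports an external result. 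The topological caveat you raise is harmless: St{\o}rmer's Definition~3.3 (reproduced here as Definition~\ref{lgau}) takes the closure in the weak operator topology, which is precisely where your limit point $B$ lives.
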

\begin{proof}
By taking into account \eqref{stirak} and (i) of Theorem
\ref{mainer}, we conclude that $(\carf(J), \bp_J)$ is
$\bp_J$--Abelian. Fix an arbitrary integer $n\in\bn$,
$A,B_1,\dots,B_n,C\in\carf(J)$ and $\f\in\cs_{\bp_J}(\carf(J))$.
Now, by (ii) of Theorem \ref{mainer}, for each $\eps>0$ there
exists a finite set $I\subset J$ such that, for any $k=1,\dots,n$,
\begin{align*}
\left|\f\left(C^*\left[\left(\sum_{g\in \bp_I}\frac{\a_g(A)}{|\bp_I|}\right),B_k\right]C\right)\right|\\
=\left|\frac1{|\bp_I|}\sum_{g\in
\bp_I}\f(C^*[\a_g(A),B_k]C)\right|<\eps\,.
\end{align*}
This leads to the assertion by Theorem 3.5 of \cite{St1}.
\end{proof}
As $\bp_J$ is acting on $\carf(J)$ as a large group of
automorphisms, by Theorem 3.1 of \cite{St1}, for each state
$\f\in\cs_{\bp_J}(\carf(J))$ there exists a conditional
expectation
$$
\F_\f:\pi_\f(\carf(J))''\to\gb_{\bp_J}(\f)
$$
of the von Neumann algebra $\pi_\f(\carf(J))''$ onto
$\gb_{\bp_J}(\f)$. As the state $\f$ is asymptotically Abelian in
average (cf. (ii) of Theorem \ref{mainer}) we prove the following
result which, even if is not used in the sequel, may have an
interest in itself.
\begin{prop}
\label{avlgu}
Let $\f\in\cs_{\bp_J}(\carf(J))$ and $A\in\carf(J)$. Then
\begin{equation}
\label{cessnet}
\mathop{\rm w}-\lim_{I\uparrow J}
\frac1{|\bp_I|}\sum_{g\in
\bp_I}U_\f(g)\pi_\f(A)U_\f(g)^{-1}=\F_\f(\pi_\f(A))\,.
\end{equation}
\end{prop}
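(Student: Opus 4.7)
The plan is to show that every weak-operator cluster point of the net of Cesaro averages
$$
M_I:=\frac{1}{|\bp_I|}\sum_{g\in\bp_I}U_\f(g)\pi_\f(A)U_\f(g)^{-1},\qquad I\subset J\text{ finite},
$$
coincides with $\F_\f(\pi_\f(A))$. Since $\|M_I\|\leq\|A\|$ for every $I$, the net is contained in a weak-operator compact set, so uniqueness of the cluster point will force weak-operator convergence. The verification reduces to two facts: any cluster point $X$ lies in $\gb_{\bp_J}(\f)$, and it agrees with $\F_\f(\pi_\f(A))$ on the cyclic vector $\Om_\f$.

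To place $X$ in $\gb_{\bp_J}(\f)$, I would first test the commutator against the dense subspace $\pi_\f(\carf(J))\Om_\f$: for every $B,C,D\in\carf(J)$,
$$
\langle[M_I,\pi_\f(B)]\pi_\f(C)\Om_\f,\pi_\f(D)\Om_\f\rangle=\frac{1}{|\bp_I|}\sum_{g\in\bp_I}\f\bigl(D^{*}[\a_g(A),B]C\bigr),
$$
which tends to $0$ by part (ii) of Theorem \ref{mainer}. Uniform boundedness of $\{M_I\}$ extends this to all vectors, so $[X,\pi_\f(B)]=0$ for every $B\in\carf(J)$, giving $X\in\pi_\f(\carf(J))'$; together with $X\in\pi_\f(\carf(J))''$ (since each $M_I$ belongs to $\pi_\f(\carf(J))$) we get $X\in\gz_\f$. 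For $\bp_J$-invariance, pick for any $h\in\bp_J$ a finite $I_h\subset J$ with $h\in\bp_{I_h}$; whenever $I\supset I_h$ the identity $h\bp_I=\bp_I$ gives $\ad U_\f(h)(M_I)=M_I$ by relabelling the sum, and passing to the weak limit yields $\ad U_\f(h)(X)=X$. Hence $X\in(\gz_\f)^{\a}=\gb_{\bp_J}(\f)$.

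Next, I would identify $X\Om_\f$. Since weak-operator convergence implies weak vector convergence, and using $U_\f(g)^{-1}\Om_\f=\Om_\f$,
$$
X\Om_\f=\mathop{\rm w}-\lim_I M_I\Om_\f=\mathop{\rm w}-\lim_I\frac{1}{|\bp_I|}\sum_{g\in\bp_I}U_\f(g)\pi_\f(A)\Om_\f=E_\f\pi_\f(A)\Om_\f,
$$
the last equality being the strong (hence weak) limit furnished by Proposition \ref{fn} applied to $\xi=\pi_\f(A)\Om_\f$. On the other hand, the $\bp_J$-abelianness of $(\carf(J),\bp_J)$ (Theorem \ref{mainer1}), coupled with the St\o rmer construction of $\F_\f$ in \cite{St1}, yields the factorization $E_\f\pi_\f(A)E_\f=\F_\f(\pi_\f(A))E_\f$; evaluating on $\Om_\f\in\ch_\f^{\bp_J}$ one obtains $\F_\f(\pi_\f(A))\Om_\f=E_\f\pi_\f(A)\Om_\f$. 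Finally, $\Om_\f$ is separating for $\gz_\f$ (if $z\in\gz_\f$ satisfies $z\Om_\f=0$, then $z\pi_\f(B)\Om_\f=\pi_\f(B)z\Om_\f=0$ for all $B$, whence $z=0$ by density), and both $X,\F_\f(\pi_\f(A))\in\gb_{\bp_J}(\f)\subset\gz_\f$ agree at $\Om_\f$, forcing $X=\F_\f(\pi_\f(A))$.

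The main subtlety is the identification $\F_\f(\pi_\f(A))\Om_\f=E_\f\pi_\f(A)\Om_\f$, which ties St\o rmer's abstractly defined conditional expectation to the spectral projection $E_\f$ onto $\bp_J$-invariant vectors; this step genuinely needs the $\bp_J$-abelianness established in Theorem \ref{mainer1}, and is precisely where the even nature of symmetric states (Theorem \ref{mainer} (i)) feeds in.
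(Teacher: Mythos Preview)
Your argument is correct and follows the same broad strategy as the paper: show that every weak-operator cluster point of the Cesaro net lies in $\gb_{\bp_J}(\f)$, then argue uniqueness. The paper, however, outsources both substeps to St{\o}rmer's work~\cite{St1}: centrality is obtained by ``arguing as in the proof of Lemma 5.3 of \cite{St1}'', and uniqueness of the limit in $\gb_{\bp_J}(\f)$ is lifted directly from the proof of Theorem~3.1 there (the large-group property guarantees at most one such limit point, which is by construction $\F_\f(\pi_\f(A))$).

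Your route is more self-contained. You establish centrality directly from Theorem~\ref{mainer}\,(ii), and for uniqueness you bring in Proposition~\ref{fn} to identify $X\Om_\f=E_\f\pi_\f(A)\Om_\f$, then match this with $\F_\f(\pi_\f(A))\Om_\f$ via the identity $E_\f\pi_\f(A)E_\f=\F_\f(\pi_\f(A))E_\f$ (which holds because $\F_\f(\pi_\f(A))$ lies in the weak closure of $\conv\{\pi_\f(\a_g(A))\}$ and $E_\f$ absorbs the $U_\f(g)$), finally invoking that $\Om_\f$ separates $\gz_\f$. This is essentially an unpacking of St{\o}rmer's uniqueness argument, with the mean ergodic theorem (Proposition~\ref{fn}) making explicit why all cluster points hit the same vector. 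The advantage of your version is that it stays within the paper's toolkit and isolates the role of $E_\f$; the paper's version is shorter but leans more heavily on~\cite{St1}.
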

\begin{proof}
Let $\{I_\b\}\subset \{I\}$ any subnet of the Cauchy net $\{I\}$
of all the finite subsets $I\subset J$ such that
${\displaystyle\frac1{|\bp_{I_\b}|}\sum_{g\in
\bp_{I_\b}}U_\f(g)\pi_\f(A)U_\f(g)^{-1}}$ converges in the weak operator
topology, which exists by compactness. Fix an arbitrary
$h\in\bp_J$. Then there exists a $\b_h$ such that $I\supset
I_{\b_h}$ implies $h\in\bp_I$. We get
\begin{align*}
&\mathop{\rm w}-\lim_{I_\b} U_\f(h)\bigg(\frac1{|\bp_{I_\b}|}\sum_{g\in \bp_{I_\b}}U_\f(g)\pi_\f(A)U_\f(g)^{-1}\bigg)U_\f(h)^{-1}\\
=&\mathop{\rm w}-\lim_{I_\b\supset I_{\b_h}} \frac1{|\bp_{I_\b}|}\sum_{g\in \bp_{I_\b}}U_{\f}(hg)\pi_\f(A)U_{\f}(hg)^{-1}\\
=&\mathop{\rm w}-\lim_{I_\b\supset I_{\b_h}} \frac1{|\bp_{I_\b}|}\sum_{g\in \bp_{I_\b}}U_{\f}(g)\pi_\f(A)U_{\f}(g)^{-1}\\
=&\mathop{\rm w}-\lim_{I_\b} \frac1{|\bp_{I_\b}|}\sum_{g\in
\bp_{I_\b}}U_{\f}(g)\pi_\f(A)U_{\f}(g)^{-1}\,.
\end{align*}
Thus, each weak limit point of the Cesaro net on the l.h.s. of
\eqref{cessnet} is invariant. By using the asymptotic Abelianess
property (ii) of Theorem \ref{mainer}, and arguing as in the proof
of Lemma 5.3 of \cite{St1}, one shows that the limit above is in
$\gz_\f$. Then it belongs to $\gb_{\bp_J}(\f)$. But
$\gb_{\bp_J}(\f)$ can contain at most one of such limit points
since $\bp_J$ acts on $\carf(J)$ as a large group of automorphisms
(cf. proof of Theorem 3.1 in \cite{St1}). As a consequence, the
limit is precisely $\F_\f(\pi_\f(A))$.
\end{proof}

\section{the structure of the symmetric states: de finetti theorem}
\label{ss5}

The present and the following sections are mainly concerned with
the characterization of the extremal symmetric states. In
particular here we consider the case in which $J$ is countable,
hence $J\equiv\bn$. We preliminary report the definition of the
sequence of permutations $\{g_n\}_{n\in\bn}$ in \cite{St2} given
by
\begin{equation}
\label{mixi}
g_{n}\left( k\right) :=\left\{
\begin{array}{ll}
2^{n-1}+k & \text{if }1\leq k\leq 2^{n-1}\,, \\
k-2^{n-1} & \text{if }2^{n-1}<k\leq 2^{n}\,,\\
k & \text{if }2^{n}<k\,.
\end{array}
\right.
\end{equation}
\begin{defin}
\label{brid}
A state $\f\in\cs(\carf(\bn))$ is said to be {\it strongly
clustering} if, for every A, B $\in\carf(\bn)$
$$
\lim_n\f(\a_{g_n}(A)B)=\f(A)\f(B)\,.
$$
\end{defin}
\begin{lem}
\label{scaz1}
If $\f\in\cs_{\bp_{\bn}}(\carf(\bn))$ is extremal, then for each $A\in\carf(\bn)$
$$
\mathop{\rm w}-\lim_{n}[\pi_\f(\a_{g_n}(A))\Om_\f]=\f(A)\Om_\f\,.
$$
\end{lem}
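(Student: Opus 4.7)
The plan is to prove that every weak cluster point of the bounded sequence $\xi_n:=\pi_\f(\a_{g_n}(A))\Om_\f=U_\f(g_n)\pi_\f(A)\Om_\f$ equals $\f(A)\Om_\f$. The first step will be an $\eps$--approximation: since $\|\xi_n\|\le\|A\|$ the statement is stable under norm perturbations of $A$, so I can reduce to $A\in\carf_0(\bn)$, say localized in some segment $[1,k]\subset\bn$; weak subsequential limits of $\{\xi_n\}$ then exist by weak compactness of bounded sets.

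The key step is a combinatorial identity for the conjugates $g_n^{-1}hg_n$. Fixing an arbitrary $h\in\bp_\bn$ supported in $[1,m]$ and using that by \eqref{mixi} each $g_n$ is an involution swapping $[1,2^{n-1}]$ with $[2^{n-1}+1,2^n]$ and fixing the rest, a direct case analysis on the three ranges of $i$ shows that, as soon as $m\le 2^{n-1}$, the conjugate $g_n^{-1}hg_n$ permutes $[2^{n-1}+1,2^{n-1}+m]$ and fixes every other site. Taking further $k\le 2^{n-1}$, the support of $A$ is disjoint from this block, so $\a_{g_n^{-1}hg_n}(A)=A$. Combining this with the group identity $U_\f(h)U_\f(g_n)=U_\f(g_n)U_\f(g_n^{-1}hg_n)$ and the invariance $U_\f(\cdot)\Om_\f=\Om_\f$ yields
$$
U_\f(h)\xi_n=U_\f(g_n)\pi_\f(\a_{g_n^{-1}hg_n}(A))\Om_\f=\xi_n
$$
for every $n$ sufficiently large (depending on $h$ and $A$).

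Thus any weak cluster point $\xi$ of $\{\xi_n\}$ satisfies $U_\f(h)\xi=\xi$ for all $h\in\bp_\bn$, i.e.\ $\xi\in\ch_\f^{\bp_\bn}$. Since $\f$ is extremal, Theorem \ref{mainer}(iii) gives the weak clustering property, which combined with Proposition \ref{fn} forces $\ch_\f^{\bp_\bn}=\bc\Om_\f$. Hence $\xi=\l\Om_\f$ with $\l=\langle\Om_\f,\xi\rangle$; since $U_\f(g_n)^*\Om_\f=\Om_\f$ one has $\langle\Om_\f,\xi_n\rangle=\langle\Om_\f,\pi_\f(A)\Om_\f\rangle=\f(A)$ for every $n$, whence $\l=\f(A)$. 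All weak cluster points coinciding with the same vector $\f(A)\Om_\f$, the full sequence must converge weakly to it.

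The main obstacle I expect is the combinatorial verification that $g_n^{-1}hg_n$ acts trivially on $[1,k]$ for $n$ large; once this bookkeeping is in hand, the appeal to Theorem \ref{mainer}(iii) to collapse the invariant subspace and the identification of $\l$ with $\f(A)$ are immediate. I note that evenness of $\f$ (and hence Lemma \ref{stiro}) enters only indirectly, through Theorem \ref{mainer1} which is invoked to convert extremality into one--dimensionality of $\ch_\f^{\bp_\bn}$.
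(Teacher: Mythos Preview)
Your proof is correct and follows essentially the same route as the paper's own argument: reduce to localized $A$, show that eventually $U_\f(h)\xi_n=\xi_n$ for every fixed $h\in\bp_\bn$ (the paper phrases this as $\a_{gg_n}(A)=\a_{g_n}(A)$ for $n$ large, citing Lemma~2.6 of \cite{St2}, which is equivalent to your conjugation identity since $g_n$ is an involution), conclude that every weak cluster point lies in $\ch_\f^{\bp_\bn}$, invoke extremality via Theorem~\ref{mainer}(iii) to get $\ch_\f^{\bp_\bn}=\bc\Om_\f$, and identify the scalar as $\f(A)$ by pairing with $\Om_\f$. The only cosmetic difference is that you spell out the combinatorics of $g_n^{-1}hg_n$ directly rather than appealing to St{\o}rmer.
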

\begin{proof}
By a standard approximation argument, we can reduce the matter to
$A\in\carf_0(\bn)$.  Let $g\in\bp_\bn$. As shown in the proof of
Lemma 2.6 of \cite{St2}, there exists $n_{A,g}$ such that
$n>n_{A,g}$ implies $\a_{gg_n}(A)=\a_{g_n}(A)$. This means that
any weak limit point (which exists by compactness) of the sequence
$\big\{\pi_\f(\a_{g_n}(A))\Om_\f\big\}$ is an invariant vector
under the action of $\bp_\bn$, that is it belongs to
$\ch_{\f}^{\bp_\bn}$. Let
$$
\xi:={\rm
w}-\lim_{k}U_\f(g_{n_{\xi}(k)})\pi_\f(A)U_\f(g_{n_{\xi}(k)})^{-1}\Om_\f\equiv{\rm
w}-\lim_{k}\pi_\f(\a_{g_{n_{\xi}(k)}}(A))\Om_\f
$$
be one of such limit points. Since $\xi$ is a vector in
$\ch_{\f}^{\bp_\bn}$ and $\f$ is extremal, by (iii) of Theorem
\ref{mainer}, one has $\xi=\G(A,\xi)\Om_\f$. By using
(\ref{mixi}), we obtain
\begin{align*}
\G(A,\xi)=&\big\langle\lim_{k}U_\f(g_{n_{\xi}(k)})\pi_\f(A)U_\f(g_{n_{\xi}(k)})^{-1}\Om_\f,\Om_\f\big\rangle\\
=&\lim_{k}\big\langle\pi_\f(\a_{g_{n_{\xi}(k)}}(A)\Om_\f,\Om_\f\big\rangle=\f(A)\,.
\end{align*}
Namely, there is only one of such weak limit points in $\ch_\f$,
which is $\f(A)\Om_\f$.
\end{proof}
\begin{thm}
\label{defin}
Let $\f\in\cs_{\bp_{\bn}}(\carf(\bn))$. Then the
following are equivalent.
\begin{itemize}
\item[(i)] $\f\in\ce(\cs_{\bp_{\bn}}(\carf(\bn)))$,
\item[(ii)] $\f$ is strongly clustering,
\item[(iii)] ${\displaystyle\f=\prod_\bn\r}$ for some even state $\r\in\bm_2(\bc)$.
\end{itemize}
\end{thm}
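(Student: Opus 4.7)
The plan is to run the cycle (i)$\Rightarrow$(ii)$\Rightarrow$(iii)$\Rightarrow$(i), relying on Lemma \ref{scaz1} and Theorem \ref{mainer} as the main inputs. For (i)$\Rightarrow$(ii), I would invoke Lemma \ref{scaz1} to get $\pi_\f(\a_{g_n}(A))\Om_\f\to\f(A)\Om_\f$ weakly, and then rewrite
\[
\f(\a_{g_n}(A)B)=\langle\pi_\f(B)\Om_\f,\pi_\f(\a_{g_n}(A^*))\Om_\f\rangle\longrightarrow\overline{\f(A^*)}\f(B)=\f(A)\f(B),
\]
which is strong clustering.

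The heart of the argument is (ii)$\Rightarrow$(iii). I would set $\r:=\f\lceil_{\carf(\{1\})}$, which is an even state on $\bm_2(\bc)\cong\carf(\{1\})$ by Theorem \ref{mainer} (i), and note that $\f\lceil_{\carf(\{j\})}=\r$ for every $j$ by $\bp_\bn$--invariance. The key claim is that $\f$ factorises across disjoint finite regions: for disjoint finite $I_1,I_2\subset\bn$, $A\in\carf(I_1)$ and $B\in\carf(I_2)$, $\f(AB)=\f(A)\f(B)$. To establish this, I would fix $n$ so large that $I_1\cup I_2\subset[1,2^{n-1}]$ and introduce $\sigma_n\in\bp_\bn$ as the product of the transpositions $(j,\,j+2^{n-1})$, $j\in I_1$. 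Since $I_2$ is disjoint from both $I_1$ and $I_1+2^{n-1}$ for $n$ large, $\sigma_n$ fixes $I_2$ pointwise; meanwhile $\sigma_n\circ g_n$ acts as the identity on $I_1$. By $\bp_\bn$--invariance,
\[
\f(\a_{g_n}(A)B)=\f\bigl(\a_{\sigma_n g_n}(A)\cdot\a_{\sigma_n}(B)\bigr)=\f(AB),
\]
which is independent of $n$; combined with (ii) this forces $\f(AB)=\f(A)\f(B)$. Iterating on single-site monomials $A_1\cdots A_M$ with $A_j\in\carf(\{j\})$ yields $\f(A_1\cdots A_M)=\prod_{j=1}^M\r(A_j)$, matching the value of $\prod_\bn\r$ via the iterated Araki--Moriya product state formula. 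Since such monomials span $\carf([1,M])$ and $\carf_0(\bn)$ is norm dense, I conclude $\f=\prod_\bn\r$.

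For (iii)$\Rightarrow$(i) I would first check (iii)$\Rightarrow$(ii): if $A,B$ are localised in $[1,M]$ and $2^{n-1}>M$, then $\a_{g_n}(A)$ and $B$ sit in $\carf$--subalgebras over disjoint index sets whose restrictions of $\f=\prod_\bn\r$ are both even product states, so the Araki--Moriya formula gives $\f(\a_{g_n}(A)B)=\f(\a_{g_n}(A))\f(B)=\f(A)\f(B)$ by invariance. To close with (ii)$\Rightarrow$(i), strong clustering combined with invariance and $g_n^{-1}=g_n$ yields $\pi_\f(\a_{g_n}(A))\Om_\f\to\f(A)\Om_\f$ weakly, hence for $\xi\in\ch_\f^{\bp_\bn}$ one has $\langle\xi,\pi_\f(A)\Om_\f\rangle=\langle\xi,\pi_\f(\a_{g_n}(A))\Om_\f\rangle\to\f(A)\langle\xi,\Om_\f\rangle$, forcing $\xi=\langle\xi,\Om_\f\rangle\Om_\f$ by density; thus $\dim\ch_\f^{\bp_\bn}=1$, which is weak clustering, hence extremality by Theorem \ref{mainer} (iii).

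The main obstacle is (ii)$\Rightarrow$(iii): since $\carf(\bn)$ is not asymptotically Abelian under $\bp_\bn$, one cannot naively commute $\a_{g_n}(A)$ past $B$ to extract a factorisation. The compensating permutation $\sigma_n$ circumvents this by converting the strong-clustering limit into the exact identity $\f(\a_{g_n}(A)B)=\f(AB)$ valid for all large $n$, with no sign factors entering because $\sigma_n$ is an honest permutation rather than a Fermionic swap.
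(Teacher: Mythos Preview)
Your proof is correct and follows essentially the same strategy as the paper's: Lemma \ref{scaz1} for (i)$\Rightarrow$(ii), the Araki--Moriya product property for (iii)$\Rightarrow$(ii), and the one--dimensionality of $\ch_\f^{\bp_\bn}$ for (ii)$\Rightarrow$(i). The only organizational difference is in (ii)$\Rightarrow$(iii): the paper peels off a single site at a time by induction, choosing at each step a permutation $g$ that fixes $1,\dots,n-1$ and sends $n$ to $g_m(n)$ so that $\f(\io_1(A_1)\cdots\io_n(A_n))=\f(\io_1(A_1)\cdots\io_{n-1}(A_{n-1})\a_{g_m}(\io_n(A_n)))$, whereas you establish the two--region factorisation $\f(AB)=\f(A)\f(B)$ in one stroke via the compensating permutation $\sigma_n$ and then iterate over sites; this is a slightly cleaner packaging of exactly the same ``compensating permutation'' trick.
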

\begin{proof}
$(i)\Rightarrow(ii)$ Suppose $\f$ is extremal and take
$A,B\in\carf(\bn)$. Then by Lemma \ref{scaz1}, we get
$$
\lim_n\f(A\a_{g_n}(B))=\lim_n\langle\pi_\f(\a_{g_n}(B))\Om_\f,\pi_\f(A^*)\Om_\f\rangle=\f(A)\f(B)\,,
$$
that is $\f$ is strongly clustering.

$(ii)\Rightarrow(i)$  Choose a vector $\xi\perp\Om_\f$ belonging
to $\ch_\f^{\bp_{\bn}}$, and fix $\eps>0$. Then there exists
$B\in\carf(\bn)$ such that $\|\xi-\pi_\f(B)\Om_\f\|<\eps/2$. Let
$A\in\carf(\bn)$ such that $\|\p_\f(A)\|\leq1$. We get
\begin{align*}
&|\langle\xi,\pi_\f(A)\Om_{\f}\rangle|=|\langle U_{\f}(g_n)\pi_\f(A^*)U_{\f}(g_n)^{-1}\xi,\Om_{\f}\rangle|\\
\leq&|\langle U_{\f}(g_n)\pi_\f(A^*)U_{\f}(g_n)^{-1}\pi_\f(B)\Om_\f,\Om_\f\rangle|+\eps/2
=|\f(\a_{g_n}(A^*)B)|+\eps/2\,.
\end{align*}
Suppose now $\f$ is strongly clustering. By taking the limit for
$n\rightarrow\infty$ on both sides, we get
$$
|\langle\xi,\pi_\f(A)\Om_{\f}\rangle|\leq|\f(A^*)||\f(B)|+\eps/2
<\eps\,.
$$
As $\eps>0$ is arbitrary and $\Om_\f$ is cyclic for
$\pi_\f(\carf(\bn))$, we get $\xi=0$. This means that
$\ch_\f^{\bp_{\bn}}$ is one dimensional, which implies (and it is
indeed equivalent by Proposition 3.1.12 of \cite{S}) that $\f$ is
extremal.

Obviously, if $\f$ is a product state of a single state as in
$(iii)$, then it is strongly clustering. Suppose now that $(ii)$
holds true. After fixing $j\in\bn$, we start by identifying
$\carf(\{j\})$ with $\bm_2(\bc)$ and, as usual, denoting
$\io_j:\bm_2(\bc)\to\carf(\bn)$ the related embedding.

By Theorem 1 of \cite{AM2}, any product state is uniquely
determined by the product of the values of the state on the
generators. Hence, it is enough to check, for each $n\in\bn$ and
$A_1,\dots,A_n\in\bm_2(\bc)$,
\begin{equation}
\label{prod1bis}
\f(\io_1(A_1)\cdots \io_n(A_n))=\prod_{j=1}^n\r(A_j)\,,
\end{equation}
The proof now proceeds as in Theorem 2.7 of \cite{St2}. We give
the details with the appropriate modifications. Define for
$j\in\bn$, $\rho_j(A):=\f(\io_j(A))$. As $\f$ is even (cf. Theorem
\ref{mainer}.(i)), all the $\rho_j$ are even. In addition, for
$i,j\in\bn$, $\rho_i=\rho_j=:\rho$ as $\f$ is symmetric. Equation
\eqref{prod1bis} can be achieved by an induction procedure. Indeed,
for $n=1$ it follows immediately, so we suppose it holds true till
$n-1$. Fix $\eps>0$. By using the inductive hypothesis and the
strong clustering property , we get
\begin{align}
\label{cinque}
&\bigg|\f(\io_1(A_1)\cdots \io_n(A_{n-1})\a_{g_m}(\io_n(A_n)))-\prod_{j=1}^n\r(A_j)\bigg|\\
=|\f(\io_1(A_1)&\cdots\io_n(A_{n-1})\a_{g_m}(\io_n(A_n)))-\f(\io_1(A_1)\cdots
\io_n(A_{n-1}))\r(A_n)| <\eps\nn
\end{align}
for some $m>n$. Choose now a permutation $g\in\bp_\bn$ such that
$g(j)=j$ if $1\leq j<n$ and $g(n) =g_{m}(n) $. Then
\begin{align*}
\f(\io_1(A_1)\cdots \io_n(A_{n}))=&\f(\a_{g}(\io_1(A_1)\cdots \io_n(A_{n})))\\
=&\f(\io_1(A_1)\cdots \io_n(A_{n-1})\a_{g_m}(\io_n(A_n)))
\end{align*}
which, combined with \eqref{cinque}, leads to the assertion as $\eps>0$ is arbitrary.
\end{proof}
When a state is extremal among the symmetric ones, the result of
Proposition \ref{avlgu} can be strengthened as we see in the
following proposition, inserted for the sake of completeness.
\begin{prop}
\label{scaz}
If $\f\in\ce(\cs_{\bp_{\bn}}(\carf(\bn)))$. Then for each $A\in\carf(\bn)$,
\begin{equation}
\label{aavvrr}
\mathop{\rm w}-\lim_{n}
U_\f(g_n)\pi_\f(A)U_\f(g_n)^{-1}=\f(A)\idd=\F_\f(\pi_\f(A))\,.
\end{equation}
\end{prop}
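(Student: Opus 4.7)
The plan is to establish the two equalities in turn; the first is the substantive content, while the second is a quick consequence of extremality.

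For the weak operator convergence $U_\f(g_n)\pi_\f(A)U_\f(g_n)^{-1}=\pi_\f(\a_{g_n}(A))\to\f(A)\idd$, uniform boundedness by $\|A\|$ together with the cyclicity of $\Om_\f$ reduce the task to showing that, for all $A,B,C\in\carf(\bn)$,
$$
\lim_n\f(C^*\a_{g_n}(A)B)=\f(A)\f(C^*B)\,.
$$
I would split $\f(C^*\a_{g_n}(A)B)=\f(C^*B\a_{g_n}(A))+\f(C^*[\a_{g_n}(A),B])$. The first term converges to $\f(A)\f(C^*B)$ directly by Lemma \ref{scaz1}, applied to the vector $\pi_\f(B^*C)\Om_\f$. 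The remaining task is to check that the commutator contribution $\f(C^*[\a_{g_n}(A),B])$ vanishes as $n\to\infty$.

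For the commutator I would argue by norm density, reducing to $A,B\in\carf_0(\bn)$ and then further splitting each into its even and odd parts. The crucial feature of the sequence $\{g_n\}$ defined in \eqref{mixi} is that, for $n$ large enough, the support of $\a_{g_n}(A)$ lies in an interval disjoint from the support of $B$. By the CAR relations, in this regime $[\a_{g_n}(A),B]=0$ whenever at least one of $A,B$ is even. When both are odd, $\{\a_{g_n}(A),B\}=0$, hence $[\a_{g_n}(A),B]=-2B\a_{g_n}(A)$; applying Lemma \ref{scaz1} again and invoking Theorem \ref{mainer}.(i), which forces $\f(A)=0$ for odd $A$, gives $\f(C^*[\a_{g_n}(A),B])\to 0$. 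Putting the two contributions together proves the first equality.

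The second equality is obtained from Proposition \ref{avlgu} together with the observation that extremality of $\f$ in $\cs_{\bp_\bn}(\carf(\bn))$, combined with the $\bp_\bn$--Abelianess of Theorem \ref{mainer1}, forces $\gb_{\bp_\bn}(\f)=\bc\idd$ by the standard criterion (Proposition 3.1.12 of \cite{S}). Hence $\F_\f(\pi_\f(A))=c(A)\idd$ for some scalar $c(A)$, and evaluating against $\Om_\f$ while using the fact that $\F_\f$ preserves the GNS vector state yields $c(A)=\f(A)$. The only delicate point in the whole argument is the sign-tracking in the parity decomposition for the odd--odd commutator term, which is routine once one exploits the disjoint-support regime forced by the shift structure of $g_n$.
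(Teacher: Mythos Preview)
Your proof is correct and follows essentially the same route as the paper. Both arguments reduce to localized elements and vectors of the form $\pi_\f(B)\Om_\f$, $\pi_\f(C^*)\Om_\f$, split into even and odd parts, exploit the disjoint-support regime forced by the definition of $g_n$ to handle the (anti)commutation, and then kill the odd contributions via Theorem \ref{mainer}.(i). The only cosmetic differences are that the paper invokes strong clustering (Theorem \ref{defin}) where you appeal directly to Lemma \ref{scaz1}---which is the same tool, since the implication (i)$\Rightarrow$(ii) in Theorem \ref{defin} is proved precisely via Lemma \ref{scaz1}---and that for the second equality the paper quotes the uniqueness observation in the proof of Proposition \ref{avlgu}, whereas you compute $\F_\f(\pi_\f(A))$ directly from $\gb_{\bp_\bn}(\f)=\bc\idd$; both are valid.
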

\begin{proof}
By a standard approximation argument, it is enough to consider
$\xi=\pi_\f(B)\Om_\f$, $\eta=\pi_\f(C^*)\Om_\f$, and reduce the
matter to $A,B,C\in\carf_0(\bn)$. Let $A=A_++A_-$, $B=B_++B_-$ the
split of $A$, $B$ into the even and odd part. By Theorem \ref{defin}, it is enough to assume that $\f$ is
strongly clustering. As $\f$ is even, by using the
standard (anti)commutation relations, we get
\begin{align*}
&\lim_n\langle U_\f(g_n)\pi_\f(A)U_\f(g_n)^{-1}\xi,\eta\rangle=\lim_n\f(C\a_{g_n}(A)B)\\
=&\lim_n\f(CB\a_{g_n}(A_+))+\lim_n\f(CB_+\a_{g_n}(A_-))-\lim_n\f(CB_-\a_{g_n}(A_-))\\
=&\f(CB)\f(A_+)+\f(CB_+)\f(A_-)-\f(CB_-)\f(A_-)\\
=&\f(CB)\f(A_+)+\f(CB_+)\f(A_-)\\
=&\f(CB)\f(A_+)+\f(CB_+)\f(A_-)+\f(CB_-)\f(A_-)\\
=&\f(CB)\f(A)=\langle(\f(A)\idd)\xi,\eta\rangle\,.
\end{align*}
This means that the above weak limit is in $\gb_{\bp_\bn}(\f)$.
But, as observed in the proof of \eqref{cessnet}, it is nothing
else than $\F_\f(\pi_\f(A))$.
\end{proof}
Notice that, in the case of the $C^*$--tensor product (i.e. when
the system is asymptotically Abelian in norm) \eqref{aavvrr} is
satisfied for each symmetric state, see \cite{St2}, Lemma 2.6. In
our situation, we have merely the expected average property \eqref{cessnet}, and \eqref{aavvrr}
is satisfied only for extremal symmetric states.

Having characterized the extremal symmetric states as the
Araki--Moriya product states, we are interested in the ergodic
decomposition of an invariant state under the action of the
permutation group. Namely, we want to express every symmetric state
as the barycenter of a unique (maximal) Radon probability measure
on $\cs_{\bp_{\bn}}(\carf(\bn))$ whose support is
$\ce(\cs_{\bp_{\bn}}(\carf(\bn)))$. The existence of such a
measure is granted, since $\cs_{\bp_{\bn}}(\carf(\bn))$ is
metrizable (see \cite{BR1}, Proposition 4.1.3 and Theorem 4.1.11).
It is unique if and only if $\cs_{\bp_{\bn}}(\carf(\bn))$ is a
Choquet simplex (see \cite{BR1}, Theorem 4.1.15). This is our
case, as we see in the following result.
\begin{thm}
\label{csyx}
$\cs_{\bp_{\bn}}(\carf(\bn))$ is a Choquet simplex. Then, for each
$\f\in\cs_{\bp_{\bn}}(\carf(\bn))$ there exists a unique maximal
Radon probability measure $\m$ on $\cs_{\bp_{\bn}}(\carf(\bn))$
such that
$$
\f(A)=\int\psi(A)d\m(\psi)\,,\quad A\in\carf(\bn)\,,
$$
and
$\m(\ce(\cs_{\bp_{\bn}}(\carf(\bn)))=1$.
\end{thm}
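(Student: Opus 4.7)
The plan is to combine two standard pieces of Choquet theory: a general simplex criterion that applies to $G$-abelian $C^*$-dynamical systems, and the Choquet-Bishop-de Leeuw theorem for metrizable compact convex sets.

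First, I would invoke Theorem \ref{mainer1}, which already establishes that $(\carf(\bn),\bp_\bn,\a)$ is $\bp_\bn$-abelian. A classical result in the theory of invariant states (see, e.g., \cite{BR1}, Theorem 3.1.14, or \cite{S}, Theorem 3.1.14, in essentially the form proved by St\o{}rmer in \cite{St1}) asserts that if a $C^*$-dynamical system $(\ga,G,\a)$ is $G$-abelian, then the $*$-weakly compact convex set $\cs_G(\ga)$ is a Choquet simplex. Applied to our setting, this immediately yields the first assertion.

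Second, once it is known that $\cs_{\bp_{\bn}}(\carf(\bn))$ is a Choquet simplex, the existence and uniqueness of a maximal Radon probability measure $\m$ on $\cs_{\bp_{\bn}}(\carf(\bn))$ whose barycenter is $\f$ follows from the Choquet-Bishop-de Leeuw theorem (see \cite{BR1}, Theorem 4.1.15). To conclude that $\m$ is actually supported on $\ce(\cs_{\bp_{\bn}}(\carf(\bn)))$, I would observe that $\carf(\bn)$ is separable (since $\bn$ is countable), hence $\cs(\carf(\bn))$ and its $*$-weakly closed subset $\cs_{\bp_{\bn}}(\carf(\bn))$ are metrizable in the $*$-weak topology. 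In metrizable compact convex sets the set of extremal points is a $G_\d$, hence Borel, and every maximal measure is supported on it (\cite{BR1}, Proposition 4.1.3 and Theorem 4.1.11), giving $\m(\ce(\cs_{\bp_{\bn}}(\carf(\bn))))=1$.

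The only genuine content is the simplex property, and that has already been reduced to the $\bp_\bn$-abelianness established in Theorem \ref{mainer1}; the remainder is a direct citation of standard Choquet theory. The main obstacle, had it not been dispatched in Theorem \ref{mainer1}, would have been verifying $\bp_\bn$-abelianness in the absence of genuine asymptotic abelianness on $\carf(\bn)$, a difficulty that was resolved by exploiting the automatic evenness of symmetric states (Theorem \ref{mainer}(i)) together with the counting estimate of Lemma \ref{stiro}.
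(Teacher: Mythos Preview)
Your proof is correct and follows essentially the same route as the paper: invoke Theorem~\ref{mainer1} for $\bp_\bn$-abelianness, cite the standard result (Sakai \cite{S}, Theorem~3.1.14) that this makes the invariant states a Choquet simplex, and then appeal to metrizability (via separability of $\carf(\bn)$) and the Choquet theory references \cite{BR1}, Proposition~4.1.3 and Theorems~4.1.11, 4.1.15, exactly as in the paragraph preceding the paper's statement. The only difference is that you spell out the separability/metrizability step explicitly, whereas the paper leaves it as a ``rephrasing of the above discussion.''
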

\begin{proof}
By Theorem \ref{mainer1}, $(\carf(\bn), \bp_\bn)$ is $\bp_\bn$--Abelian,
then, as a consequence of Theorem 3.1.14 of \cite{S},
$\cs_{\bp_{\bn}}(\carf(\bn))$ is a Choquet simplex. The last part
is a rephrasing of the above discussion.
\end{proof}
The property for the set of extremal states to be $*$--weakly
closed implies a nice result. In fact, in \cite{B} it is shown
that a simplex with closed boundary is affinely isomorphic to the
probability measures on a compact Hausdorff space. These facts are
stated in the following proposition, whose proof, based on Lemma
2.2, Theorem 5.3 and arguments analogous to those developed in
Theorem 2.8 of \cite{St2}, is left to the reader.
\begin{prop}
\label{boundary} The Choquet simplex $\cs_{\bp_{\bn}}(\carf(\bn))$
has a $*$--weakly closed boundary and is affinely isomorphic to
the probability measures on a closed interval.
\end{prop}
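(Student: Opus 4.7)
The plan is to exploit Lemma \ref{matr1} together with Theorem \ref{defin} to exhibit an explicit homeomorphism between $[0,1]$ and the extreme boundary $\ce(\cs_{\bp_{\bn}}(\carf(\bn)))$, then invoke the Bauer-type theorem from \cite{B} to upgrade this to an affine isomorphism at the level of the whole simplex. Define
$$
\Phi:[0,1]\longrightarrow\cs_{\bp_{\bn}}(\carf(\bn))\,,\qquad \Phi(\m):=\f_\m=\prod_\bn\r_\m\,,
$$
where $\r_\m$ is the unique even state on $\bm_2(\bc)$ with eigenvalues $\m,1-\m$ provided by Lemma \ref{matr1}. By Theorem \ref{defin}, the image of $\Phi$ is exactly $\ce(\cs_{\bp_{\bn}}(\carf(\bn)))$.

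The key technical step is to show that $\Phi$ is a homeomorphism onto its image. Injectivity is immediate, since $\m=\f_\m(\io_1(e_{11}))$ recovers $\m$ from $\f_\m$. For continuity I would use the explicit description of product states given just before Lemma \ref{matr1}: on any fixed elementary product of embedded matrix units $\io_{j_1}(A_1)\cdots\io_{j_k}(A_k)$, the value $\f_\m(\io_{j_1}(A_1)\cdots\io_{j_k}(A_k))=\prod_{i=1}^k\r_\m(A_i)$ is a polynomial in $\m$, and therefore continuous. Since the linear span of such products is norm-dense in $\carf(\bn)$ and all states have norm $1$, an $\eps/3$ argument gives the continuity of $\m\mapsto\f_\m(A)$ for every $A\in\carf(\bn)$, hence the $*$--weak continuity of $\Phi$. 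As $[0,1]$ is compact and $\cs_{\bp_{\bn}}(\carf(\bn))$ is Hausdorff, $\Phi$ is automatically a homeomorphism onto its image, which in particular is $*$--weakly closed. This is precisely the closedness of the boundary.

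Given the closed boundary, I would then invoke the result of \cite{B} (a Choquet simplex whose extreme boundary is closed -- a Bauer simplex -- is affinely homeomorphic to the space of regular Borel probability measures on its extreme boundary, equipped with the $*$--weak topology). By Theorem \ref{csyx}, $\cs_{\bp_{\bn}}(\carf(\bn))$ is a Choquet simplex; combining this with the homeomorphism $\ce(\cs_{\bp_{\bn}}(\carf(\bn)))\cong[0,1]$ just established, one obtains an affine isomorphism
$$
\cs_{\bp_{\bn}}(\carf(\bn))\;\cong\;M_1([0,1])\,,
$$
where $M_1([0,1])$ denotes the probability measures on the closed unit interval.

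The only delicate point I anticipate is verifying the continuity of $\Phi$ cleanly. The obstacle is that $\carf(\bn)$ is not a tensor product and the Jordan--Klein--Wigner identification \eqref{iscartens} destroys locality; however, since $\f_\m$ is even, Lemma \ref{matr2} shows that $\f_\m$ is transported to the genuine tensor product state $\om_\m$ on $\overline{\bigotimes_\bn\bm_2(\bc)}^{C^*}$, where continuity in $\m$ is manifest on localized elements and extends to the whole algebra by norm density. This reduction, together with Theorem \ref{defin} for the identification of the image and Bauer's theorem for the final affine isomorphism, completes the argument along the lines of Theorem 2.8 of \cite{St2}.
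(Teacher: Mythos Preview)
Your proposal is correct and follows essentially the same approach the paper indicates: it uses Lemma~\ref{matr1} and Theorem~\ref{defin} to parametrize the extreme boundary by $[0,1]$ via $\m\mapsto\f_\m$, checks this is a homeomorphism (continuity via polynomial dependence on $\m$, or equivalently via Lemma~\ref{matr2}), and then applies Bauer's theorem from \cite{B} together with Theorem~\ref{csyx}---exactly the ingredients the paper names and the line of argument in Theorem~2.8 of \cite{St2}.
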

Put $\ga:=\carf(\bn)$. Let $\f\in\cs(\ga)$. Obviously,
$\pi_\f(\ga)''$ is a hyperfinite von Neumann algebra. A state $\f$
on a $C^{*}$-algebra $\ga$ is called {\it factor state} if the
double commutant $\pi_\f(\ga)^{''}$ of $\pi_\f(\ga)$ is a factor.
The state $\f$ is said of type $X$ if $\pi_\f(\ga)''$ is of type
$X$, where $X=\ty{I}_{\infty}$, $\ty{II_{1}}$, $\ty{II_{\infty}}$,
$\ty{III}$ or $\ty{III_{\l}}$, $\l\in[0,1]$ according to the
Connes' classification of the type $\ty{III}$ factors \cite{C}.
\begin{prop}
\label{factor}
Let $\r_\m$ be a even state in $\bm_2(\bc)$ with
eigenvalues $\m$ and $1-\m$, and
${\displaystyle\f_\m:=\prod_\bn\r_\m}$ the corresponding product state on $\carf(\bn)$. Then
\begin{itemize}
\item[(1)] $\f_\m$ is a factor state of type $\ty{I_{\infty}}$
if and only if $\m=0$ or $\m=1$.
\item[(2)]  $\f_\m$ is a factor state of type $\ty{II_{1}}$  if and only
if $\m=1/2$.
\item[(3)]  $\pi_{\f_\m}(\ga)''$ is of type $\ty{III_{\l}}$ if and only
if $0<\m<1/2$ and $\l=\frac{\mu }{1-\mu }$, or $1/2<\m<1$ and
$\l=\frac{1-\mu }{\mu }$.
\end{itemize}
\end{prop}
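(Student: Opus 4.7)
The strategy is to transfer the problem from the Fermi algebra to the familiar infinite tensor product setting via Lemma \ref{matr2}, and then invoke the classification of product state factors on infinite tensor products of $\bm_2(\bc)$ due to Powers.

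By Lemma \ref{matr2}, we have $\f_\m = \om_\m \circ \g$, where $\g: \carf(\bn) \to \gb$ is the Jordan--Klein--Wigner $*$-isomorphism and $\om_\m$ is the product state $\bigotimes_\bn \r_\m$ on ${\displaystyle\gb := \overline{\bigotimes_\bn \bm_2(\bc)}^{C^*}}$. Since $\g$ is a $*$-isomorphism of $C^*$-algebras, the GNS representation $\pi_{\f_\m}$ is unitarily equivalent to $\pi_{\om_\m} \circ \g$; in particular $\pi_{\f_\m}(\carf(\bn))'' \cong \pi_{\om_\m}(\gb)''$ as von Neumann algebras, so the type of the former coincides with that of the latter. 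The problem thus reduces to determining the type of $\pi_{\om_\m}(\gb)''$, which is an ITPFI${}_2$ factor.

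The boundary and tracial cases are handled directly. When $\m \in \{0,1\}$, $\r_\m$ is a pure state on $\bm_2(\bc)$, hence $\om_\m$ is a pure state on $\gb$; the associated GNS representation is therefore irreducible, so $\pi_{\om_\m}(\gb)'' = B(\ch_{\om_\m})$ is a type $\ty{I_{\infty}}$ factor, the GNS space being separable and infinite-dimensional. When $\m = 1/2$, $\r_{1/2}$ is the unique tracial state on $\bm_2(\bc)$, so $\om_{1/2}$ is the unique tracial state on $\gb$, and its GNS representation realizes the hyperfinite $\ty{II_{1}}$ factor.

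For $\m \in (0,1) \setminus \{1/2\}$, the state $\om_\m$ is an infinite product of identical copies of a single faithful, non-tracial state on $\bm_2(\bc)$ with eigenvalues $\m$ and $1-\m$. By Powers' classical theorem on ITPFI${}_2$ factors (compatible with Connes' classification \cite{C}), $\pi_{\om_\m}(\gb)''$ is the Powers factor of type $\ty{III_{\l}}$, where $\l$ is the ratio of the smaller to the larger eigenvalue: this gives $\l = \m/(1-\m)$ when $0 < \m < 1/2$ and $\l = (1-\m)/\m$ when $1/2 < \m < 1$, in agreement with the statement. The essential non-trivial input is Powers' classification itself; the reduction provided by Lemma \ref{matr2} guarantees that the anticommutation relations, which complicated the earlier analysis of symmetric states, play no role in determining the factor type.
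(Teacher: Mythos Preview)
Your proof is correct and follows essentially the same approach as the paper: transfer the problem to the tensor product setting via Lemma~\ref{matr2}, so that $\pi_{\f_\m}(\ga)''\cong\pi_{\om_\m}(\gb)''$, and then read off the type from the known classification of ITPFI${}_2$ factors. The only difference is cosmetic: the paper dispatches all three cases by a single reference (A.17 of \cite{St}), whereas you spell out the pure and tracial cases explicitly and invoke Powers' theorem for the remaining range; the underlying argument is identical.
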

\begin{proof}
By Lemma \ref{matr2}, we get (under the same notations)
$\pi_{\f_\m}(\ga)''\sim\pi_{\om_\m}(\gb)''$. But the
$\pi_{\om_\m}(\gb)''$ are factors whose type is determined by the
ratio between the smallest and the largest eigenvalue of the trace
operator describing $\r_\m$, according to the three possibilities
listed above in the statement. The reader is referred to A.17 of
\cite{St} and the references cited therein.
\end{proof}
Notice that we do not have the type $\ty{II_{\infty}}$, and
furthermore, the type $\ty{II_{1}}$ occurs only for $\f_{1/2}$. Thus, the latter gives rise the trivial face made of a
singleton.\footnote{The $\ty{II_{\infty}}$ might appear for the more general situation
investigated in \cite{F1}, where the regrouped CAR algebra with $2d$ generators is attacked to each site.}
The cases of the portions corresponding to $\ty{I_{\infty}}$ and
$\ty{III}$ are covered by the next result. Recall that a face of a
given simplex $K$ is a convex subset $F$ of $K$ such that, if
$\chi\in F$, $\psi\in K$ and for $\m>0$, $\psi\leq\m\chi$, implies
$\psi\in F$. The proof of the forthcoming proposition follows {\it
mutatis mutandis} the lines of the analogous Lemma 2.9 in
\cite{St2}. The details are left to the reader.
\begin{prop}
\label{fdczzo}
If $X$ denotes $\ty{I_{\infty}}$ or $\ty{III}$, and
$$
\cs_{\bp_{\bn}}(\carf(\bn))_X:=\{\f\in
\cs_{\bp_{\bn}}(\carf(\bn))\mid\,\f\,\,{\rm is\,of\,type}\,\,X\}\,,
$$
then
$\cs_{\bp_{\bn}}(\carf(\bn))_X$ is a face of
$\cs_{\bp_{\bn}}(\carf(\bn))$.
\end{prop}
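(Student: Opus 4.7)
The plan is to reduce the face property to a monotonicity statement on the decomposing measures furnished by the ergodic decomposition. First, by Lemma \ref{matr1}, Proposition \ref{boundary}, and Theorem \ref{csyx}, I would parametrize the extreme boundary $\ce(\cs_{\bp_\bn}(\carf(\bn)))$ by $[0,1]$ via the correspondence $\m\leftrightarrow\f_\m$, so that every symmetric state admits a unique Radon probability $\nu_\f$ on $[0,1]$ with $\f=\int_{[0,1]}\f_\m\,d\nu_\f(\m)$. Since by Proposition \ref{boundary} the extreme boundary is $*$--weakly closed, the assignment $\f\mapsto\nu_\f$ is an affine, order--preserving isomorphism from $\cs_{\bp_\bn}(\carf(\bn))$ onto the Radon probability measures on $[0,1]$.

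Next, invoking the $\bp_\bn$--Abelianness of $(\carf(\bn),\bp_\bn)$ from Theorem \ref{mainer1}, I would argue that this barycentric decomposition coincides with the central decomposition of $\pi_\f(\carf(\bn))''$, yielding
\[
\pi_\f(\carf(\bn))''\simeq\int_{[0,1]}^{\oplus}\pi_{\f_\m}(\carf(\bn))''\,d\nu_\f(\m).
\]
Setting $I_X:=\{\m\in[0,1]\mid\pi_{\f_\m}(\carf(\bn))''\text{ is of type }X\}$, Proposition \ref{factor} identifies $I_{\ty{I_\infty}}=\{0,1\}$ and $I_{\ty{III}}=(0,1)\setminus\{1/2\}$, both Borel subsets of $[0,1]$. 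Since a direct integral of von Neumann algebras all of type $X$ is itself of type $X$, one concludes that $\f\in\cs_{\bp_\bn}(\carf(\bn))_X$ if and only if $\supp\nu_\f\subset I_X$.

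Granted this characterisation, the face property is immediate. If $\chi\in\cs_{\bp_\bn}(\carf(\bn))_X$ and $\psi\in\cs_{\bp_\bn}(\carf(\bn))$ satisfies $\psi\leq\m\chi$ for some $\m>0$, the order isomorphism transports this inequality into $\nu_\psi\leq\m\nu_\chi$ as Radon measures on $[0,1]$, whence $\supp\nu_\psi\subset\supp\nu_\chi\subset I_X$, so that $\psi$ is of type $X$.

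The main obstacle, and the step carrying the real content, is the identification in the second paragraph of the barycentric decomposition with the central decomposition of $\pi_\f(\carf(\bn))''$, for this is what licenses reading the von Neumann type off the support of $\nu_\f$ via Proposition \ref{factor}. This identification leans on the $\bp_\bn$--Abelianness from Theorem \ref{mainer1} and is standard in the $G$--Abelian framework; together with the order--theoretic translation $\psi\leq\m\chi\Rightarrow\nu_\psi\leq\m\nu_\chi$, it is exactly the ingredient that St{\o}rmer carries out mutatis mutandis in the proof of Lemma 2.9 of \cite{St2} for the tensor product case.
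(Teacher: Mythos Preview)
Your strategy is sound but genuinely different from the paper's. The paper does not pass through the barycentric decomposition at all; it simply invokes St{\o}rmer's Lemma 2.9, whose mechanism the paper spells out just below in Remark \ref{remm5}: whenever $\psi\leq\l\chi$ in $\cs_{\bp_\bn}(\carf(\bn))$ one finds a central projection $P\in\gz_\chi$ with $\pi_\psi(\carf(\bn))''\simeq P\,\pi_\chi(\carf(\bn))''$, and cutting a type $X$ von Neumann algebra by a central projection again yields type $X$. That argument needs neither the parametrisation of the boundary by $[0,1]$ nor the identification of the ergodic and central decompositions, and it is precisely what allows the paper to extend the statement in Remark \ref{remm5} to arbitrary Borel sets $E\subset(0,1)$ of $\ty{III_\l}$--types. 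Your route, by contrast, exploits the Bauer--simplex picture of Proposition \ref{boundary} and makes the face visible as the set of probability measures concentrated on a fixed Borel subset of $[0,1]$.

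There is, however, a real slip in your characterisation. You assert that $\f$ is of type $X$ if and only if $\supp\nu_\f\subset I_X$. This is fine for $X=\ty{I_\infty}$ because $I_{\ty{I_\infty}}=\{0,1\}$ is closed; but for $X=\ty{III}$ the set $I_{\ty{III}}=(0,1)\setminus\{1/2\}$ is open, and the direct--integral criterion is $\nu_\f(I_X^{\,c})=0$, not $\supp\nu_\f\subset I_X$. For instance, if $\nu_\f$ is Lebesgue measure on $[0,1]$ the resulting $\f$ is of type $\ty{III}$, yet $\supp\nu_\f=[0,1]\not\subset I_{\ty{III}}$. Consequently the chain ``$\supp\nu_\psi\subset\supp\nu_\chi\subset I_X$'' in your last paragraph is not available. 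The repair is immediate: from $\nu_\psi\leq\m\nu_\chi$ one gets $\nu_\psi\ll\nu_\chi$, hence $\nu_\chi(I_X^{\,c})=0$ forces $\nu_\psi(I_X^{\,c})=0$, which is the correct type--$X$ condition for $\psi$. With this correction, and after noting that convexity of $\cs_{\bp_\bn}(\carf(\bn))_X$ (part of the paper's definition of face) follows at once from $\nu_{\l\f_1+(1-\l)\f_2}=\l\nu_{\f_1}+(1-\l)\nu_{\f_2}$, your argument goes through.
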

Recalling that the boundary of an arbitrary nontrivial compact
convex $K$ is nonvoid by the Krein--Milman Theorem, by
Propositions \ref{boundary} and \ref{factor}, one has that
$\ce(\cs_{\bp_{\bn}}(\carf(\bn))_{\ty{III}})$ consists of two open
connected components of $\ce(\cs_{\bp_{\bn}}(\carf(\bn)))$,
$\ce(\cs_{\bp_{\bn}}(\carf(\bn))_{\ty{I_{\infty}}})$ is given by
two states, that is the pure states, whereas $\ce(
\cs_{\bp_{\bn}}(\carf(\bn))_{\ty{II_{1}}})$ is made by one state,
that is the unique trace.

We end the section with a brief sketch on the extension of the
situation in Proposition \ref{fdczzo} to more general cases and
leave further details to the reader.

Let $\ga$ be a separable $C^*$--algebra and $E\in(0,1)$ be a Borel set. Fix a state $\f\in\cs(\ga)$ and put
$M:=\pi_\f(\ga)''$. Let
\begin{equation}
\label{bbbb}
M=\int^\otimes_\G M_\g\di\n(g)
\end{equation}
be its direct integral decomposition into von Neumann factors. Here, $(\G,\n)$
is a standard probability measure space which can be chosen as
$(\spec(\gz_\f),\n_{\om\lceil_{\gz_\f}})$, $\spec(\gz_\f)$ and
$\om$ being the spectrum of $\gz_\f$, and a faithful normal state
on $M$ respectively, see e.g. \cite{S, T1}.

We say that the state $\f$ is of type $X_E$ if $M$ contains only
type $\ty{III_{\l}}$ factors for some $\l\in E$ in its direct
integral decomposition \eqref{bbbb}. Let $\G_E\subset\G$ be the
subset made of the $\g$ such that $M_\g$ is a type $\ty{III_{\l}}$
factor for some $\l\in E$. By Theorem 2.2 of \cite{Su} (see also
(ii) in Theorem 21.2 of \cite{N}), $\G_E$ is a $\n$--measurable
set. The fact that $\f$ is of type $X_E$ simply means that
$\n(\G_E)=1$. Put
$$
\cs_{\bp_{\bn}}(\ga)_{X_E}:=\{\f\in
\cs_{\bp_{\bn}}(\ga)\mid\,\f\,\,{\rm is\,of\,type}\,\,X_E\}\,.
$$
\begin{rem}
\label{remm5}
By using the same lines as in Lemma 2.9 of \cite{St2}, we can show that
$\cs_{\bp_{\bn}}(\carf(\bn))_{X_E}$ is a face of
$\cs_{\bp_{\bn}}(\carf(\bn))$.
\end{rem}
In fact, let $\f=\l\f_1+(1-\l)\f_2$, $\l\in[0,1]$ be a convex
combination of states in $\cs_{\bp_{\bn}}(\carf(\bn))_{X_E}$. As
in the proof of Lemma 2.9 of \cite{St2}, we find
$P_1,P_2\in\gz_\f$ such that
$$
\pi_{\f_k}(\ga)''=P_k\pi_{\f}(\ga)''\,,\quad k=1,2\,.
$$
Of course, $P_1\pi_{\f}(\ga)''$ contains only type $\ty{III_{\l}}$
factors, $\l\in E$, in its direct integral factor decomposition
\eqref{bbbb}. The same happens to $(P_2-P_1P_2)\pi_{\f}(\ga)''$ as
$P_2-P_1P_2\leq P_2$. Since one can show that $P_1+P_2-P_1P_2=I$,
it follows that $\f$ is of type $X_E$. So
$\cs_{\bp_{\bn}}(\carf(\bn))_{X_E}$ is closed under convex
combinations. Let now take $\om\in\cs_{\bp_{\bn}}(\carf(\bn))$,
$\f\in\cs_{\bp_{\bn}}(\carf(\bn))_{X_E}$ with $\om\leq\l\f$ for
some $\l>0$. As before, there exists a projection $P\in\gz_\f$
such that $\pi_{\om}(\ga)''=P\pi_{\f}(\ga)''$, which implies that
$\om\in\cs_{\bp_{\bn}}(\carf(\bn))_{X_E}$.

\section{symmetric states in the uncountable case}
\label{sec6}

The starting point to obtain the characterization of the extremal
symmetric states is the clustering property given in Definition
\ref{brid}. It is the bridge between the extremality and the
property to being a product state (cf. Theorem \ref{defin}). Suppose
that $\carf(J)$ is generated by infinitely uncountably many
annihilators, that is $J$ is uncountable. Fix a state
$\f\in\cs(\carf(J))$. We generalize the clustering property in
the following way.

We start with a pair $\ci:=(I,\n)$, where $I\subset J$ is
countable and $\n:I\to\bn$ is a bijection defining an order
$i_1,i_2,\dots$ on $I$. Define the sequence
$\{g^\ci_n\}_{n\in\bn}\subset\bp_J$ as $g^\ci_n:=g_{\n(i_n)}$,
where $g_k\in\bp_\bn$ is given in \eqref{mixi}. Fix now
$\f\in\cs(\carf(J))$. It is said {\it strongly clustering} if,
for each $A,B\in\carf(J)$ there exists $\ci:=(I,\n)$ with
$A,B\in\carf(I)$ such that
\begin{equation}
\label{wlunct1} \lim_n\f(\a_{g^\ci_n}(A)B)=\f(A)\f(B)\,.
\end{equation}

As a preliminary fact we note that, if $\ci:=(I,\n)$ with
$A\in\carf(I)$ localized, and $g\in\bp_J$, then there exists
$n_{A,g}$ such that $n>n_{A,g}$ implies
$\a_{gg^\ci_n}(A)=\a_{g^\ci_n}(A)$. In fact, as $g$ changes only a
finite number of indices in $I$, say up to $m$, and $A$ is
localized, say in the first $s$ elements of $I$, it is enough to
choose $n$ such that $\max\{m,s\}\leq2^{n-1}$. By using this fact,
Lemma \ref{scaz1} holds true also in the present situation.
Namely, fix $A\in\carf(J)$. Then for each $\ci=(I,\n)$ such that
$A\in\carf(I)$
\begin{equation}
\label{wlunct}
\mathop{\rm
w}-\lim_{n}[\pi_\f(\a_{g^\ci_n}(A))\Om_\f]=\f(A)\Om_\f\,,
\end{equation}
provided $\f$ is extremal.

Now we are ready to extend Theorem \ref{defin} to uncountable case.
\begin{thm}
\label{defin1}
Let $\f\in\cs_{\bp_{J}}(\carf(J))$. Then $\f\in\ce(\cs_{\bp_{J}}(\carf(J)))$ if and only if
${\displaystyle\f=\prod_J\r}$ for some even state $\r\in\bm_2(\bc)$.
\end{thm}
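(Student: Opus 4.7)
The plan is to extend the three-way equivalence of Theorem~\ref{defin} to the uncountable setting, with the generalized strong clustering \eqref{wlunct1} replacing Definition~\ref{brid}. The analogue \eqref{wlunct} of Lemma~\ref{scaz1} and the stabilization remark preceding the statement are the key technical inputs that make the reduction to the countable case possible.

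\emph{Extremal implies strongly clustering.} Given $A,B\in\carf(J)$, each is a norm limit of localized elements, hence lies in $\carf(I_A)$, $\carf(I_B)$ for some countable $I_A,I_B\subset J$. Put $I:=I_A\cup I_B$, pick any bijection $\n:I\to\bn$, and set $\ci:=(I,\n)$. By \eqref{wlunct},
\[
\lim_n\f(A\a_{g^\ci_n}(B))=\langle\f(B)\Om_\f,\pi_\f(A^*)\Om_\f\rangle=\f(A)\f(B).
\]
For the converse I copy the $(ii)\Rightarrow(i)$ argument of Theorem~\ref{defin}: pick $\xi\in\ch_\f^{\bp_J}$ orthogonal to $\Om_\f$, fix $\eps>0$ and $B$ with $\|\xi-\pi_\f(B)\Om_\f\|<\eps/2$ (so $|\f(B)|<\eps/2$ by orthogonality), and for any $A$ with $\|\pi_\f(A)\|\leq 1$ choose $\ci$ with $A,B\in\carf(I)$. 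Using $U_\f(g^\ci_n)\Om_\f=\Om_\f$ one gets
\[
|\langle\xi,\pi_\f(A)\Om_\f\rangle|\leq|\f(\a_{g^\ci_n}(A^*)B)|+\eps/2,
\]
and letting $n\to\infty$ together with strong clustering bounds this by $\eps$. Cyclicity of $\Om_\f$ forces $\xi=0$, hence $\dim\ch_\f^{\bp_J}=1$ and extremality of $\f$ follows from Proposition~3.1.12 of \cite{S}.

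\emph{Strongly clustering implies product state.} Set $\r(A):=\f(\io_j(A))$, well defined by symmetry and even by Theorem~\ref{mainer}(i). Given distinct $j_1,\dots,j_n\in J$ and $A_1,\dots,A_n\in\bm_2(\bc)$, I induct on $n$. For the inductive step, choose $\ci=(I,\n)$ with $j_1,\dots,j_n\in I$ ordered by $\n(j_k)=k$; by the inductive hypothesis together with strong clustering applied to $\io_{j_1}(A_1)\cdots\io_{j_{n-1}}(A_{n-1})$ and $\io_{j_n}(A_n)$, some $m>n$ gives, exactly as in \eqref{cinque},
\[
\bigl|\f(\io_{j_1}(A_1)\cdots\io_{j_{n-1}}(A_{n-1})\a_{g^\ci_m}(\io_{j_n}(A_n)))-\prod_{k=1}^n\r(A_k)\bigr|<\eps.
\]
Since \eqref{mixi} sends $n\mapsto 2^{m-1}+n>n$ for $m$ large, the transposition $g\in\bp_J$ swapping $j_n$ with $g^\ci_m(j_n)$ fixes $j_1,\dots,j_{n-1}$ and satisfies $\a_g(\io_{j_n}(A_n))=\a_{g^\ci_m}(\io_{j_n}(A_n))$, so invariance of $\f$ rewrites the left-most term as $\f(\io_{j_1}(A_1)\cdots\io_{j_n}(A_n))$ and $\eps\to 0$ completes the step. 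Conversely, any product state $\prod_J\r$ restricts to $\prod_I\r$ on each countable $\carf(I)$; Theorem~\ref{defin}~$(iii)\Rightarrow(ii)$ then gives strong clustering along the sequence $g_n$ of \eqref{mixi}, which under the bijection $\n$ is precisely $g^\ci_n$. Thus \eqref{wlunct1} holds and by the previous paragraph $\f$ is extremal.

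The main obstacle is purely bookkeeping: the witness $\ci=(I,\n)$ in \eqref{wlunct1} depends on the operators in play, so each step requires constructing a countable subsystem large enough to contain them. Once $\ci$ is pinned down, every argument reduces cleanly to the countable case of Section~\ref{ss5}, the stabilization observation that $\a_{gg^\ci_n}(A)=\a_{g^\ci_n}(A)$ for $n$ large ensuring compatibility between the global group $\bp_J$ and the countable sequence $\{g^\ci_n\}$.
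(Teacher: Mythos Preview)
Your argument follows the paper's strategy exactly: establish extremal $\Leftrightarrow$ strongly clustering $\Leftrightarrow$ product, with each implication reduced to the countable case through a witness $\ci=(I,\nu)$, using \eqref{wlunct} for one direction and the $\xi\perp\Om_\f$ computation for the other, just as the paper does.

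There is one bookkeeping slip in your ``strongly clustering $\Rightarrow$ product'' step. You \emph{prescribe} $\nu(j_k)=k$ and then invoke clustering for that particular $\ci$, but the definition \eqref{wlunct1} is existential: it only supplies \emph{some} $\ci$ for the pair $\io_{j_1}(A_1)\cdots\io_{j_{n-1}}(A_{n-1})$, $\io_{j_n}(A_n)$, and you have no control over the enumeration $\nu$ it hands you. The paper avoids this by taking whatever $\ci$ the definition provides and then observing that the computation lives entirely inside $\carf(I)\sim\carf(\bn)$, where Theorem~\ref{defin} applies. Your transposition trick actually survives for an arbitrary $\ci$ --- for $m$ large enough (depending on $\max_k\nu(j_k)$), $g^\ci_m(j_n)$ lands outside $\{j_1,\dots,j_n\}$ regardless of how $\nu$ orders them, so the swap fixing $j_1,\dots,j_{n-1}$ is still available --- hence the repair is a one--line adjustment. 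Alternatively, since your proof of ``extremal $\Rightarrow$ strongly clustering'' via \eqref{wlunct} in fact yields \eqref{wlunct1} for \emph{every} $\ci$ with $A,B\in\carf(I)$, you may simply remark that along the chain extremal $\Rightarrow$ product you are free to choose $\nu$ as you please.
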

\begin{proof}
If $\f$ is a product state, it is obviously strongly clustering.
Suppose now $\f$ be strongly clustering and fix a sequence
$\{A_i\}_{i\in\bn}\subset\bm_2(\bc)$. The proof proceeds by
induction. Choose $\ci:=(I,\n)$ such that
$A:=\iota_{j_1}(A_1)\cdots\iota_{j_{n-1}}(A_{n-1})$,
$B:=\iota_{j_n}(A_n)$ belong to $I$ and \eqref{wlunct1} holds
true. After noticing that
$$
\f(g^\ci_n(A)B)=\f\lceil_{\carf(I)}(g^\ci_n(A)B)\,.
$$
we can reduce the matter to the countable situation, that is to
Theorem \ref{defin}, as $\carf(I)\sim\carf(\bn)$. Thus a symmetric
state is a product one if and only if it is strongly clustering.
On the other hand, if $\f$ is extremal we conclude by
\eqref{wlunct} that $\f$ is strongly clustering. For the reverse
implication we reason as in Theorem \ref{defin} as well. Indeed,
choose a vector $\xi\perp\Om_\f$ belonging to $\ch_\f^{\bp_J}$ and
fix $\eps>0$. Then there exists $B\in\carf(J)$ such that
$\|\xi-\pi_\f(B)\Om\|<\eps/2$. Let $A\in\carf(J)$ such that
$\|\p_\f(A)\|\leq1$. We get for each $\ci=(I,\n)$ such that
$A,B\in\carf(I)$,
\begin{equation}
\label{wlunct2}
|\langle\xi,\pi_\f(A)\Om_{\f}\rangle|\leq|\f(\a_{g^{\ci}_n}(A^*)B)|+\eps/2\,.
\end{equation}
As $\f$ is strongly clustering, there exists an $\ci:=(I,\n)$ such
that $A,B\in\carf(I)$ and \eqref{wlunct1} holds. Taking the limit
on both sides in \eqref{wlunct2}, one obtains
$$
|\langle\xi,\pi_\f(A)\Om_{\f}\rangle|\leq\lim_n|\f(\a_{g^\ci_n}(A^*)B)|+\eps/2
=|\f(A^*)||\f(B)|+\eps/2<\eps\,.
$$
We conclude as before that $\xi=0$ and then $\f$ is extremal.
Since we verified that both properties (i.e. being a product state
and extremeness) are equivalent to strong clustering, the proof is
complete.
\end{proof}
\begin{rem}
Theorem \ref{defin1} holds true {\it mutatis mutandis} for the case of infinite tensor
product of $C^*$--algebras considered in \cite{St2}, when the index set is uncountable.
\end{rem}
In the general (uncountable) case it is possible to achieve the
ergodic decomposition of a symmetric state $\f$ as in Theorem 5.5.
(countable case). In fact, the convex of the symmetric states on
$\carf(J)$ is yet a Choquet simplex (see \cite{S}, Theorem
3.1.14). The difference w.r.t. the countable case consists in the
fact that $\cs_{\bp_J}(\carf(J))$ is not metrizable. Then the
unique maximal decomposing measure $\m$ of $\f$ is only
pseudosupported on $\ce(\cs_{\bp_J}(\carf(J)))$. Namely, we still
have
$$
\f(A)=\int\psi(A)d\m(\psi)\,,\quad A\in\carf(J)\,,
$$
but here $\ce(\cs_{\bp_{J}}(\carf(J))$ is merely a Borel set which
is not a Baire one. Then the maximal measure $\m$  satisfies
$\m(B)=1$ for each Baire set $B$ containing
$\ce(\cs_{\bp_{J}}(\carf(J))$ (see \cite{BR1}, Theorem 4.1.11).

Finally, one realizes that most of the results at the end of
Section \ref{ss5}, except Remark \ref{remm5}, may be extended to
the general (uncountable) case. For example, as in Proposition
\ref{boundary}, one sees that the simplex of the symmetric states
on $\carf(J)$ has a $*$--weakly closed boundary and is affinely
isomorphic to the regular probability measures on the unit
interval.
\section*{Acknowledgements}
The second--named author would like to thank Simion Stoilow Institute of Mathematics
of the Romanian Academy for the warm hospitality, thanks to a BITDEFENDER invited professorship
position, where part of the present work has been done.

\end{document}